\numberwithin{equation}{section}
\newtheorem{theorem}{Theorem}[section]
\newtheorem{proposition}[theorem]{Proposition}
\newtheorem{lemma}[theorem]{Lemma}
\newtheorem{remark}[theorem]{Remark}
\newtheorem{example}[theorem]{Example}
\newtheorem{corollary}[theorem]{Corollary}
\newtheorem{definition}[theorem]{Definition}
\newcommand{\s}{\section}
\newcommand{\R}{\mathbb R}
\newcommand{\bt}{\begin{theorem}}
\newcommand{\et}{\end{theorem}}
\newcommand{\bl}{\begin{lemma}}
\newcommand{\el}{\end{lemma}}
\newcommand{\bd}{\begin{definition}}
\newcommand{\ed}{\end{definition}}
\newcommand{\bc}{\begin{corollary}}
\newcommand{\ec}{\end{corollary}}
\newcommand{\bp}{\begin{proof}}
\newcommand{\ep}{\end{proof}}
\newcommand{\bx}{\begin{example}}
\newcommand{\ex}{\end{example}}
\newcommand{\bi}{\begin{exercise}}
\newcommand{\ei}{\end{exercise}}
\newcommand{\bo}{\begin{proposition}}
\newcommand{\eo}{\end{proposition}}
\newcommand{\br}{\begin{remark}}
\newcommand{\er}{\end{remark}}
\newcommand{\be}{\begin{equation}}
\newcommand{\ee}{\end{equation}}
\newcommand{\ba}{\begin{align}}
\newcommand{\ea}{\end{align}}
\newcommand{\bn}{\begin{enumerate}}
\newcommand{\en}{\end{enumerate}}
\newcommand{\bg}{\begin{align*}}
\newcommand{\bcs}{\begin{cases}}
\newcommand{\ecs}{\end{cases}}
\def\R{\mathbb R}
\def\@makefnmark{}
\newcommand{\bean}{\begin{eqnarray*}}
\newcommand{\eean}{\end{eqnarray*}}
\renewcommand{\triangle}{\Delta}
\renewcommand{\epsilon}{\varepsilon}
\title[Existence and multiplicity of solutions for  Schr\"{o}dinger-Born-infeld system]
{A perturbation approach \\for the Schr\"{o}dinger-Born-infeld system:\\
 solutions
in the subcritical and critical case}
\author[Z. S. Liu]{Zhisu Liu}
\author[G. Siciliano]{Gaetano Siciliano}
\address[Z. S. Liu]{\newline\indent School of Mathematics and Physics, China University of Geosciences,
\newline\indent
Wuhan, Hubei, 430074, PR China}
\email{\href{mailto:liuzhisu183@sina.com}{liuzhisu183@sina.com}}
\address[G. Siciliano]{\newline\indent
Departamento de Matem\'atica
\newline\indent
Instituto de Matem\'atica e Estat\'istica
\newline\indent
 Universidade de S\~ao Paulo
\newline\indent
Rua do Mat\~ao 1010,  05508-090 S\~ao Paulo, SP, Brazil }
\email{\href{mailto:sicilian@ime.usp.br}{sicilian@ime.usp.br}}
\thanks{Z. Liu is supported by the NSFC (11626127);
and Hunan Natural Science Excellent Youth Fund (2020JJ3029).
G. Siciliano was supported by Fapesp grant 2018/17264-4, CNPq grant 304660/2018-3 and Capes
(Brazil) and INdAM (Italy).}
\subjclass[2000]{35J50, 35J93, 35Q60}
\keywords{Schr\"{o}dinger-Born-Infeld  system. Subcritical and critical growth. Variational methods.
Perturbation approach.}
\begin{document}

\begin{abstract}
In this paper, we study the following Schr\"{o}dinger-Born-infeld system with a general nonlinearity
$$
\left\{
 \begin{array}{ll}
-\triangle u+u+\phi u=f(u)+\mu|u|^4u\,\,&\mbox{in}\,\,\R^3,\\
-\textrm{div}\displaystyle\bigg(\frac{\nabla\phi}{\sqrt{1-|\nabla\phi|^2}}\bigg)=u^2&\mbox{in}\,\,\R^3,\\
u(x)\rightarrow0,\,\,\phi(x)\rightarrow0,&\,\text{as}\,\,x\rightarrow\infty,
\end{array}
\right.
$$
where $\mu\geq0$ and $f\in C(\R,\R)$ satisfies suitable assumptions.
 This system arises from a suitable coupling of the nonlinear Schr\"{o}dinger
equation and the Born-Infeld theory.
We use a new perturbation approach to prove the existence and multiplicity of nontrivial solutions of the above system
in the  subcritical and  critical case.
We emphasise that our results  cover the case $f(u)=|u|^{p-1}u$ for $p\in(2,{5}/{2}]$ and $\mu=0$
which was left  in \cite{Azzollini19} as an open problem.
\end{abstract}
\maketitle
\begin{center}
\begin{minipage}{12cm}
\tableofcontents
\end{minipage}
\end{center}

\section{Introduction}
\label{sec1}

\subsection{Overview}
We are concerned with the following Schr\"{o}dinger-Born-Infeld system
\begin{equation}\tag{SBI}\label{SBI}
\left\{
 \begin{array}{ll}
-\triangle u+u+\phi u=f(u)+\mu|u|^{4}u\,\,&\mbox{in}\,\,\R^3,\medskip\\
-\displaystyle\text{div}\bigg(\frac{\nabla\phi}{\sqrt{1-|\nabla\phi|^2}}\bigg)=u^2&\mbox{in}\,\,\R^3,\medskip\\
u(x)\rightarrow0,\,\,\phi(x)\rightarrow0,&\,\,\text{as}\,\,x\rightarrow\infty,
\end{array}
\right.
\end{equation}
where  $f\in C(\R,\R)$ is a suitable nonlinearity and $\mu\geq0$.
Evidently when $\mu>0$ we are in the critical case.

Such a type of system arises from the coupling of the nonlinear Schr\"{o}dinger
equations with the equations of the electromagnetic field in  the Born-Infeld theory, and can be proposed to provide a mathematical description of the
interaction between a charged particle and the electromagnetic field generated by itself.
The Born-Infeld theory was firstly developed by Born and Infeld and introduced the idea that
both the matter and the electromagnetic field were expression of a unique physical entity.

Unlike systems involving  the  Maxwell lagrangian for the electromagnetic field
(such as the coupling with the Schr\"odinger or Klein-Gordon equation), systems which involve the
Born-Infeld lagrangian are  much less studied in the mathematical literature
especially due to the difficulties related to the mean curvature operator appearing in the
second equation (the electrostatic field equation).

We cite here the papers \cite{BK, GG1,GG2, IKL} where a second order approximation of the
Born-Infeld lagrangian is used and the second equation
is then reduced to the quasilinear equation
$$-\Delta\phi -\Delta_{4}\phi = u^{2}.$$
In particular in \cite{GG1,GG2} the existence of solutions
is studied by variational methods mixed with truncation techniques.

To the best of our knowledge the first paper which deals with
the mean curvature operator in such kind of systems is the one by
 Yu \cite{Yu2010}. Here the author considers the  coupling
of the Klein-Gordon with the Born-Infeld lagrangian and in the electrostatic case 
the search of standing waves solutions reduces to study the system
\begin{equation}\label{eq:Yu}
\left\{
 \begin{array}{ll}
-\triangle u+(m^2-(\omega+\phi)^2)u=|u|^{p-1}u\,\,&\mbox{in}\,\,\R^3,\medskip\\
-\text{div}\displaystyle\bigg(\frac{\nabla\phi}{\sqrt{1-|\nabla\phi|^2}}\bigg)=u^2(\omega+\phi)&\mbox{in}\,\,\R^3, \medskip\\
u(x)\rightarrow0,\,\,\phi(x)\rightarrow0,&\,\,\text{as}\,\,x\rightarrow\infty.
\end{array}
\right.
\end{equation}
Evidently there are some difficulties related to the operator appearing
in the second equation.
One  can not use variational approach to deal with this problem by restricting the functional at the usual
function spaces. The reason is that the quantity $1/\sqrt{1-|\nabla \phi(x)|^2}$ makes sense only when
$x\in\R^3$ is such that $|\nabla \phi(x)|<1$.
And so this inequality has to be considered in the functional setting as a necessary constraint.

We cite also the paper \cite{BdP} where the authors consider
the electrostatic Born-Infeld equation, the second equation in \eqref{eq:Yu},
with extended charges as sources.

Inspired by  \cite{Yu2010}, Azzollini, Pomponio and Siciliano in \cite{Azzollini19} proposed
and introduced a new model which represents a variant of the well-known Schr\"{o}dinger-Maxwell
system as it was introduced in Benci and Fortunato \cite{Benci98}, see also
and D'Aprile and Mugnai \cite{DM}. By replacing the usual Maxwell lagrangian with
the Born-Infeld one, the authors
in \cite{Azzollini19} studied the existence of electrostatic solutions
which lead to the following system
\begin{equation}\label{eqn:SBI}
\left\{
 \begin{array}{ll}
-\triangle u+u+\phi u=|u|^{p-1}u\,\,&\mbox{in}\,\,\R^3,\medskip\\
-\textrm{div}\displaystyle\bigg(\frac{\nabla\phi}{\sqrt{1-|\nabla\phi|^2}}\bigg)=u^2&\mbox{in}\,\,\R^3,\medskip\\
u(x)\rightarrow0,\,\,\phi(x)\rightarrow0,&\,\,\text{as}\,\,x\rightarrow\infty.
\end{array}
\right.
\end{equation}
In particular they use a slightly modified version of the monotonicity trick due to
Jeanjean \cite{Jeanjean99} and Struwe \cite{Struwe85} to
prove the existence of radial ground state solutions
of (\ref{eqn:SBI}) for $p\in(5/2, 5)$. They left as an open problem the case of smaller $p$ and the existence
of non-radial solutions.

\subsection{Main results}
In this paper one of our aim is to prove the existence of
ground state for  problem \eqref{eqn:SBI} also in the case $p\in (2,5/2]$.
Actually we will consider a more general nonlinearity than the power one.
Moreover we establish  a multiplicity results and treat also the
problem with a critical nonlinearity.
However we are not able to avoid the radial setting.

\medskip

We recall that \eqref{SBI} comes variationally from the action functional $F$ defined by
$$
F(u,\phi)=\frac{1}{2}\int_{\R^3}(|\nabla u|^2+u^2)+\frac{1}{2}\int_{\R^3}\phi u^2
-\frac{1}{2}\int_{\R^3}(1-\sqrt{1-|\nabla \phi|^2})-\int_{\R^3}F(u)-\frac{\mu}{6}\int_{\R^3}|u|
^6.
$$
which presents evident difficulties.
As introduced in \cite{Yu2010},
the presence of the term $\int_{\R^3}(1-\sqrt{1-|\nabla \phi|^2})$ forces us to restrict the setting of
admissible function $\phi$ and to define a suitable function set.
Define
$$
X:=D^{1,2}(\R^3)\cap\{\phi\in C^{0,1}(\R^3):\,\|\nabla \phi\|_{\infty}\leq1\}
$$
where $D^{1,2}(\R^3)$ is the completion of $C_0^\infty(\R^3)$ with respect to the norm $\|\nabla\cdot\|_2$.
Let as usual
$$
H_r^1(\R^3)=\{u\in H^1(\R^3):  u \text{ is radially symmetric}\}
$$
and
$$
X_r=\{\phi\in X\,\big|\,\phi\text{ is radially symmetric}\}.
$$

We will study the functional $F$ by restricting on the function setting $H^1_{r}(\R^3)\times X_{r}$,
which is not a vector space.
This brings us some difficulties for the variational approach;
indeed to compute variations with respect
to $\phi$ along the direction established by a generic smooth and compactly supported
function, a direct restriction is to require in advance that $\|\nabla \phi\|_\infty<1$.
This results in a nontrivial obstacle to find the relation between solutions of the minimizing problem
and solutions of the second equation of system \eqref{SBI} for $u$ fixed, see Yu \cite{Yu2010}.
In \cite{BdP} the author obtained the uniqueness of the solution of the second equation
 in the radial setting which will be useful in our analysis.

Moreover, the functional $F$ is strongly indefinite, but this is usually overcome
by the classical  {\sl reduction method} with the help of the results obtained in \cite{BdP}.
Precisely,
for any radial $u\in H_r^1(\R^3)$ fixed, there exists
a unique $\phi_u\in X_{r}$ solution of the second equation of system \eqref{SBI} 
and this reduces
the problem to that of finding critical points of the (no more strongly indefinite)
one variable functional $I(u)=F(u, \phi_u)$ defined on $H^1_{r}(\R^3)$.
It is known that the radial setting is a natural constraint for the problem,
see \cite[Proposition 2.6]{Azzollini19}.

\medskip

In order to state our results let us state the assumptions on the nonlinearity $f$:
\begin{enumerate}[label=(f\arabic*),ref=f\arabic*,start=1]
\item
\label{f1} $f\in C(\R,\R)$ and $\lim_{s\to 0}f(s)/s=0$; \medskip
\item
 \label{f2} $|f(s)|\leq C(1+|s|^{p})$ for $p\in(2,5)$; \medskip
\item
 \label{f3} for any $s>0$, $0<\varrho F(s)\leq f(s)s$, where $\varrho\in(3,4)$ and $F(s)=\int_{0}^sf(\tau)d\tau$.
\end{enumerate}

These are quite natural assumptions when dealing with variational methods.
In particular by \eqref{f1}-\eqref{f2} it follows that
 for any $\epsilon>0$,
there exists $C_{\epsilon} > 0$ such that
\begin{equation}\label{eq:fF}
|f(s)|\leq \epsilon |s|+C_{\epsilon}|s|^{p} \quad\text{and}\quad |F(s)| \leq \varepsilon s^{2}+C_{\varepsilon}|s|^{p+1}.
\end{equation}
On the other hand  \eqref{f3} is useful when dealing with the Mountain Pass
geometry and Palais-Smale sequences.
From \eqref{f2} and \eqref{f3} it follows that for suitable constants
$$C_{1}|s|^{\varrho} - C_{2}\leq C_{3} s^{2} + C_{4}|s|^{p+1},$$
which allowes  $p>2$.

The following is our first result in this paper and concerns  the subcritical case.
\begin{theorem}\label{Thm:EXST}
Assume that \eqref{f1}-\eqref{f3} hold. Then system \eqref{SBI}
with $\mu=0$
 has at least a radial ground state solution
 , namely, a solution
 $(u,\phi)\in H_r^1(\R^3)\times X_r$ minimizing the energy functional $F$
 among all the nontrivial radial solutions.
\end{theorem}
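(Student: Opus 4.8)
The plan is to use the reduction method to kill the $\phi$-variable and then a version of the monotonicity trick (Jeanjean--Struwe) on the reduced functional $I$, working entirely in the radial space $H^1_r(\R^3)$. First I would record the properties of the map $u\mapsto\phi_u$ that follow from the uniqueness result of \cite{BdP}: for every $u\in H^1_r(\R^3)$ there is a unique $\phi_u\in X_r$ solving the second equation, with $0\le\phi_u<1$ (or at least $\|\nabla\phi_u\|_\infty\le 1$), the map is continuous and bounded on bounded sets, and — crucially — the reduced functional
$$
I(u)=\frac12\int_{\R^3}(|\nabla u|^2+u^2)+\frac14\int_{\R^3}\phi_u u^2-\int_{\R^3}F(u)
$$
is $C^1$ with $I'(u)v=\int_{\R^3}(\nabla u\cdot\nabla v+uv)+\int_{\R^3}\phi_u uv-\int_{\R^3}f(u)v$, so that critical points of $I$ correspond to solutions of \eqref{SBI} with $\mu=0$. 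Here I would use that $\int\phi_u u^2=\int \frac{|\nabla\phi_u|^2}{\sqrt{1-|\nabla\phi_u|^2}}\ge 0$, giving in particular $\frac14\int\phi_u u^2\le\frac12\int\phi_u u^2$, and that this term is weakly lower semicontinuous along bounded sequences in $H^1_r$ thanks to the compactness of the radial embedding $H^1_r(\R^3)\hookrightarrow L^q(\R^3)$ for $q\in(2,6)$.

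Next I would set up the Mountain Pass geometry. Assumptions \eqref{f1}--\eqref{f2} and \eqref{eq:fF} give, since $\int\phi_u u^2\ge0$, that $I(u)\ge\frac12\|u\|^2-\varepsilon\|u\|_2^2-C_\varepsilon\|u\|_{p+1}^{p+1}\ge\alpha>0$ on a small sphere $\|u\|=r$; and \eqref{f3} (the Ambrosetti--Rabinowitz-type condition with $\varrho\in(3,4)$) together with the fact that $\phi_{tu}=\dots$ scales sublinearly — more precisely $\int\phi_{tu}(tu)^2$ grows no faster than a power strictly below $t^\varrho$, which is exactly where $\varrho>3$ enters — yields $I(tu_0)\to-\infty$ along a fixed $u_0$. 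This defines the usual min-max level $c>0$. The obstacle — and the reason the naive approach only reaches $p>5/2$ — is that an arbitrary Palais--Smale sequence at level $c$ need not be bounded: the competition in the Pohozaev/Nehari identities between the quadratic term, the cubic-type term $\int\phi_u u^2$ and $\int F(u)$ does not close for $p\le 5/2$.

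The remedy, and the heart of the argument, is the monotonicity trick. I would introduce the family of functionals $I_\lambda(u)=\frac12\|u\|^2+\frac14\int\phi_u u^2-\lambda\int_{\R^3}F(u)$ for $\lambda\in[\delta,1]$, check that they share a uniform Mountain Pass geometry with min-max levels $c(\lambda)$, and invoke the abstract theorem of Jeanjean \cite{Jeanjean99} (as adapted in \cite{Azzollini19}): $\lambda\mapsto c(\lambda)$ is non-increasing, hence differentiable for a.e. $\lambda$, and at each such $\lambda$ there exists a \emph{bounded} Palais--Smale sequence $(u_n^\lambda)$ for $I_\lambda$ at level $c(\lambda)$. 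For such a bounded sequence I would pass to the limit using the compact radial embeddings — here $\int\phi_{u_n}u_n^2\to\int\phi_u u^2$ and $\int F(u_n)\to\int F(u)$ because $u_n\to u$ strongly in $L^q$, $q\in(2,6)$, and standard arguments on $u\mapsto\phi_u$ — to get a nontrivial critical point $u_\lambda$ of $I_\lambda$ at level $c(\lambda)$, nontrivial because $I_\lambda(u_\lambda)=c(\lambda)\ge c(1)>0$. I would then take $\lambda_n\uparrow 1$, obtain critical points $u_n:=u_{\lambda_n}$ of $I_{\lambda_n}$, and establish a uniform bound on $(u_n)$ in $H^1_r$: this follows from the Pohozaev-type identity satisfied by $u_n$ (which in the reduced setting combines with $I_{\lambda_n}'(u_n)=0$) together with \eqref{f3}; the point is that for fixed critical points one has \emph{both} the Nehari and Pohozaev relations, which together do control the norm even when $p\le 5/2$. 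Finally, passing to the limit $n\to\infty$ gives a nontrivial critical point $u$ of $I=I_1$, hence a radial solution $(u,\phi_u)$ of \eqref{SBI}. For the ground-state property I would let $m=\inf\{F(v,\phi_v):(v,\phi_v)\text{ a nontrivial radial solution}\}$, show $m>0$ and $m\le c$ by the construction above, and show $m$ is attained by a minimizing sequence of solutions: such a sequence is bounded (again by the combined Nehari--Pohozaev bound), converges strongly in $L^q$ by radial compactness, and its weak limit is a nontrivial solution whose energy is $\le m$ by weak lower semicontinuity of $\|\cdot\|^2$ and of $\int\phi_v v^2$, hence equals $m$.

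The step I expect to be the main obstacle is obtaining the uniform $H^1_r$-bound on the critical points $u_{\lambda_n}$ as $\lambda_n\uparrow1$ — i.e. producing and exploiting the right Pohozaev identity for the reduced functional in the presence of the mean-curvature nonlocal term $\phi_u$, and checking that the Nehari--Pohozaev combination genuinely closes for all $p\in(2,5)$; the regularity needed to write a Pohozaev identity (and the behaviour of $\phi_u$ under dilations $u(\cdot/t)$) is the delicate point, and is presumably where the properties of $\phi_u$ borrowed from \cite{BdP} do the real work.
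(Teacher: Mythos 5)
Your overall route is genuinely different from the paper's: you propose the reduction method followed by Jeanjean's monotonicity trick on the family $I_\lambda=A-\lambda B$ with $B(u)=\int_{\R^3}F(u)$, whereas the paper perturbs the equation itself, adding $\lambda\|u\|_2u$ on the left and $\lambda|u|^{q-1}u$ with $q\in(\max\{p,4\},5)$ on the right, so that for every fixed $\lambda\in(0,1]$ Palais--Smale sequences are bounded outright (no ``a.e.\ $\lambda$'' caveat), then proves the uniform bound $c_\lambda\geq C\|u_\lambda\|^2$ by combining the Nehari relation and the Pohozaev identity (with a \emph{negative} weight on Pohozaev, $a=-1$, $b=2$, absorbing the Born--Infeld term via \eqref{eq:1/2}), and finally lets $\lambda\to0^+$; the ground state is then obtained by minimizing $I$ over the set of nontrivial critical points using Remark \ref{rem:I} and the strong-convergence argument of Lemma \ref{Lem:compact}. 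Your central assertion that Nehari plus Pohozaev controls critical points for all $p\in(2,5)$ is in fact correct and is exactly the content of \eqref{eqn:Pr2}--\eqref{eqn:Pr5} and Remark \ref{rem:I}; since in your family $\lambda$ only multiplies $F$, that computation transfers, so the step you flag as the main obstacle is available (though you give none of the details, and the negative-weight combination plus \eqref{eq:1/2} is the non-obvious ingredient).

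There are, however, two concrete gaps. First, your reduced functional is wrong: $I(u)=\frac12\|u\|^2+\frac14\int_{\R^3}\phi_uu^2-\int_{\R^3}F(u)$ is the Schr\"odinger--Poisson identity; in the Born--Infeld setting the reduced functional is \eqref{eq:I}, and only the \emph{inequality} \eqref{eq:1/2} relates $\int(1-\sqrt{1-|\nabla\phi_u|^2})$ to $\frac12\int\phi_uu^2$. With your expression the derivative formula you state is false and critical points need not solve \eqref{SBI}; the error also undermines your last step, where you invoke weak lower semicontinuity of the nonlocal term for the ground-state minimization --- the true nonlocal part contains the negatively signed BI energy, and the paper instead proves strong convergence of the minimizing sequence of solutions. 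Second, and more seriously, your Mountain Pass geometry along rays $t\mapsto tu_0$ fails precisely in the regime the theorem is about: you claim $\int\phi_{tu}(tu)^2$ grows strictly slower than $t^{\varrho}$, but the available estimates (Lemma \ref{Lem:tech}) only give growth up to $t^{7/2}$, so for $\varrho\in(3,7/2]$ (i.e.\ pure powers $p\in(2,5/2]$) you cannot conclude $I(tu_0)\to-\infty$; this is exactly the source of the $p>5/2$ restriction in \cite{Azzollini19}. The remedy is the dilation path $e_t=t^2e(t\,\cdot)$ of Lemma \ref{Lem:MP1}, under which the nonlocal term scales like $t^3$ while $\int F(e_t)\geq t^{2\varrho-3}\int F(e)$, so that $\varrho>3$ suffices. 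With these two repairs your monotonicity-trick route could plausibly be completed, but as written it does not cover $p\in(2,5/2]$.
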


In particular
if $f$ is a pure power nonlinearity
this result improves the one in \cite{Azzollini19} so
extending the existence of ground state to all the range
$p\in (2,5)$.

A second result concerns multiplicity of solutions again in the subcritical case.
\begin{theorem}\label{Thm:multi}
Assume  that \eqref{f1}-\eqref{f3}  hold and $f$ is odd. Then system  \eqref{SBI}
with $\mu=0$
 has infinite many radial high energy solutions, namely, solutions
 $(u_j,\phi_j)\subset H_r^1(\R^3)\times X_r$ such that the energy functional
$F$ tends to infinity.
\end{theorem}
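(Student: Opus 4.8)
The plan is to combine the reduction procedure already used for Theorem~\ref{Thm:EXST} with a version of the symmetric Mountain Pass theorem (the $\Z_2$-equivariant minimax of Ambrosetti--Rabinowitz). Since $f$ is odd, the reduced functional $I(u)=F(u,\phi_u)$ is even on $H^1_r(\R^3)$: this uses that $\phi_{-u}=\phi_u$ (by the uniqueness of the solution of the second equation in the radial class obtained in \cite{BdP}), so that the nonlocal term $\int_{\R^3}\phi_u u^2$ is even in $u$, while the remaining terms of $F$ are manifestly even because $F(-s)=F(s)$ follows from $f$ odd and $\mu=0$. The first step, therefore, is to record that $I\in C^1(H^1_r(\R^3),\R)$, $I$ is even, $I(0)=0$, and critical points of $I$ correspond to radial solutions $(u,\phi_u)$ of \eqref{SBI} with $\mu=0$; all of this is available from the analysis preceding Theorem~\ref{Thm:EXST}.

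Next I would verify the geometric hypotheses of the symmetric Mountain Pass theorem on $I$. The local structure near the origin is the same as in Theorem~\ref{Thm:EXST}: by \eqref{eq:fF} and the nonnegativity of $\int_{\R^3}\phi_u u^2$, there are $\rho,\alpha>0$ with $I(u)\geq\alpha$ for $\|u\|_{H^1}=\rho$. For the unbounded-from-below direction needed to produce high-energy levels, I would exhibit, for each $m\in\N$, an $m$-dimensional subspace $E_m\subset H^1_r(\R^3)$ on which $I(u)\to-\infty$ as $\|u\|\to\infty$; this rests on the superquadratic bound $F(s)\geq C_1|s|^\varrho-C_2$ coming from \eqref{f3} with $\varrho>3$, together with the crucial fact that the coupling term $\int_{\R^3}\phi_u u^2$ grows at most like $\|u\|^2$ (indeed one has the a~priori bound $0\le\int\phi_uu^2\le \|\nabla\phi_u\|_2\|\nabla\phi_u\|_2^{?}$; more simply, since $\|\nabla\phi_u\|_\infty\le 1$ one controls $\int\phi_u u^2$ by the charge $\int u^2$ up to a constant). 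Hence on any finite-dimensional subspace, where all norms are equivalent, the $-\int F(u)$ term dominates and $I\to-\infty$. Defining the standard symmetric minimax values $c_m=\inf_{\gamma\in\Gamma_m}\max_{u\in\partial D_m}I(\gamma(u))$ over odd maps, one gets $c_m\to\infty$.

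The main obstacle, as usual for superlinear problems, is the Palais--Smale condition; here it is compounded by the nonlocal/constrained nature of $\phi_u$. I would first show any PS sequence $(u_n)$ at level $c$ is bounded: testing $I'(u_n)$ with $u_n$ and combining with $I(u_n)\to c$, the term $\tfrac12\int\phi_{u_n}u_n^2-\tfrac14\langle\text{(coupling derivative)},u_n\rangle$ has a favorable sign (this is exactly the computation that made $\varrho>3$ sufficient, as flagged after \eqref{eq:fF}), so the $\varrho$-superhomogeneity of $F$ yields $\|u_n\|^{\min(2,\varrho)}\lesssim 1+\|u_n\|$ and hence boundedness. Then, working in $H^1_r(\R^3)$, I would use the compact embedding $H^1_r(\R^3)\hookrightarrow L^{q}(\R^3)$ for $q\in(2,6)$ (Strauss) to pass to a strongly convergent subsequence in the subcritical nonlinear term; the delicate point is the continuity and compactness properties of the map $u\mapsto\phi_u$ and of the functional $u\mapsto\int\phi_u u^2$ along this subsequence, which I expect to follow from the variational characterization of $\phi_u$ and the uniform bound $\|\nabla\phi_u\|_\infty\le1$ — this is the step where the results of \cite{BdP} and the estimates already set up for Theorem~\ref{Thm:EXST} do the heavy lifting. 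Granting PS, the symmetric Mountain Pass theorem produces infinitely many critical points $u_j$ with $I(u_j)=c_j\to\infty$, i.e. radial solutions $(u_j,\phi_j):=(u_j,\phi_{u_j})$ of \eqref{SBI} with $F(u_j,\phi_j)\to\infty$, which is the claim.
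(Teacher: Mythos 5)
Your plan applies the symmetric Mountain Pass theorem directly to the unperturbed reduced functional $I$, but the two steps you treat as routine are exactly the ones that fail in the regime covered by this theorem, and they are the reason the paper introduces a perturbation at all. First, boundedness of Palais--Smale sequences for $I$: since \eqref{f3} only gives $\varrho\in(3,4)$, the usual computation $I(u_n)-\tfrac1\theta I'(u_n)[u_n]$ breaks down. If you take $\theta=4$ to make the coefficient of $\int_{\R^3}\phi_{u_n}u_n^2$ (after using \eqref{eq:1/2}) nonnegative, then $\int(\tfrac14 f(u_n)u_n-F(u_n))\geq(\tfrac{\varrho}{4}-1)\int F(u_n)$ is \emph{negative} and of order up to $\|u_n\|^{p+1}$, which cannot be absorbed by $\|u_n\|^2$; if instead you take $\theta\leq\varrho<4$, the nonlocal term enters with a negative coefficient, and the only available estimate is the quartic one of Lemma \ref{Lem:tech}, $\int\phi_u u^2\leq C\|u\|_{12/5}^4$, again not absorbable. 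Your parenthetical claim that $\|\nabla\phi_u\|_\infty\leq1$ lets you control $\int\phi_u u^2$ by $\int u^2$ is false: the Lipschitz bound gives no uniform bound on $\|\phi_u\|_\infty$. This same error undermines your finite-dimensional geometry: with only the quartic bound and $\varrho<4$, the term $-\int F(u)$ does not dominate on spheres of large radius in a finite-dimensional subspace, so $I\to-\infty$ there is not justified (the mountain-pass geometry in Lemma \ref{Lem:MP1} is obtained along the special scaling $e_t=t^2e(tx)$, not along rays).

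The paper's route is different precisely to avoid these obstructions: it works with the perturbed functional $I_\lambda(u)=I(u)+\tfrac{\lambda}{3}\|u\|_2^3-\tfrac{\lambda}{q+1}\int|u|^{q+1}$, $q\in(\max\{p,4\},5)$, for which PS sequences are bounded (the $\lambda$-terms absorb the bad contributions, Lemma \ref{Lem:compact}) and the symmetric Mountain Pass theorem applies, producing critical values $c_\lambda(j)$; uniform-in-$\lambda$ bounds on the critical points come from combining $I_\lambda'(u_\lambda)[u_\lambda]=0$ with a Pohozaev-type identity (the computation \eqref{eqn:Pr1}--\eqref{eqn:Pr5}), and one then lets $\lambda\to0^+$ via Proposition \ref{prop:convergence}. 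Finally, divergence of the limiting levels $c_0(j)$ is not automatic and is proved by bounding $I_\lambda$ from below by the auxiliary even functional $J(u)=\tfrac12\|u\|^2-\int F(u)-\tfrac1{q+1}\int|u|^{q+1}$ and invoking genus/Ljusternik--Schnirelmann theory on its Nehari manifold. None of these ingredients (perturbation, Pohozaev-based uniform estimates, the limit $\lambda\to0$, the comparison functional for level divergence) appears in your proposal, so as written it does not prove the theorem.
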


In what follows, we turn our attention to the case where the nonlinearity has critical growth
and without loss of generality we assume $\mu=1$.
For this purpose, and to deal with compactness issues, we consider the further assumption on the nonlinearity $f$:
\begin{enumerate}[label=(f\arabic*),ref=f\arabic*,start=4]
\item 
\label{f4}  there exist $D>0$ and $2<r<6$ such that $f(t)\geq Dt^r$
for $t\geq0$.
\end{enumerate}
\begin{theorem}\label{th:crit}
Assume that \eqref{f1}-\eqref{f4} hold.
Then system \eqref{SBI} with $\mu=1$
 admits a radial ground state solution $(u,\phi)\in H_r^1(\R^3)\times X_r$
 in the following cases
 \begin{itemize}
 \item[1)] $r\in(4, 6)$, or \smallskip
 \item[2)] $r\in(2, 4]$ and  $D$ is sufficiently large.
 \end{itemize}
\end{theorem}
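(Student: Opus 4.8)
The plan is to follow the perturbation/monotonicity-trick strategy already used for Theorem \ref{Thm:EXST}, but now one must cope with the critical term $\mu|u|^4u$, which destroys the Palais--Smale condition at arbitrary levels. As in the subcritical case, we reduce to the one-variable functional $I_\mu(u)=F(u,\phi_u)$ on $H^1_r(\R^3)$, where $\phi_u\in X_r$ is the unique radial solution of the second equation given by \cite{BdP}; by \cite[Proposition 2.6]{Azzollini19} the radial space is a natural constraint, so radial critical points of $I_\mu$ give genuine solutions. The functional $I_\mu$ has a Mountain Pass geometry (uniformly, thanks to \eqref{f1}--\eqref{f3} and the fact that the $\phi_u$-term is nonnegative and of lower order), so there is a Mountain Pass level $c_\mu>0$; a ground state will be obtained by showing $c_\mu$ is attained. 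First I would introduce the family of scaled functionals $I_{\mu,\lambda}(u)=\frac12\|u\|^2_{H^1}+\frac12\int\phi_uu^2-\lambda\big(\int F(u)+\frac{\mu}{6}\int|u|^6\big)$ for $\lambda$ in a neighbourhood of $1$, apply the monotonicity trick (Jeanjean \cite{Jeanjean99}, Struwe \cite{Struwe85}) to get, for a.e.\ $\lambda$, a bounded Palais--Smale sequence at the corresponding level $c_\lambda$, and then pass to the limit $\lambda\to1^-$ after establishing that the associated solutions are bounded and that $c_\lambda$ is left-continuous.

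The heart of the argument is the compactness analysis: one must show that the relevant min-max level lies strictly below the threshold at which bubbling of the critical term occurs. Concretely, a bounded Palais--Smale sequence $u_n\rightharpoonup u$ for $I_\mu$ at level $c$ satisfies, after the usual Brezis--Lieb splitting and using that $\phi_{u_n}$ converges strongly enough (radial compactness of the Born--Infeld term, together with the a priori bound $\|\nabla\phi_{u_n}\|_\infty\le1$), a dichotomy: either $u_n\to u$ strongly, or the energy loss is at least $\frac{1}{3}S^{3/2}\mu^{-1/2}$, where $S$ is the best Sobolev constant for $D^{1,2}(\R^3)\hookrightarrow L^6$. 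Hence it suffices to prove
$$
c_\mu<\frac13 S^{3/2}\mu^{-1/2}=\frac13 S^{3/2}\qquad(\mu=1).
$$
This is where hypothesis \eqref{f4} enters. I would estimate $c_\mu$ from above by evaluating $I_\mu$ along a path built from truncated Aubin--Talenti instantons $U_\varepsilon$ (concentrating standard extremals for $S$), exactly as in Brezis--Nirenberg. The Sobolev and $L^2$ terms contribute the familiar $\frac13 S^{3/2}+O(\varepsilon)$ and $O(\varepsilon)$ (in dimension $3$ the lower-order $L^2$ loss is of order $\varepsilon$, with a negative sign after optimisation only if one has enough room), while the extra $\int\phi_{U_\varepsilon}U_\varepsilon^2$ term is nonnegative but, by scaling of the mean-curvature operator, of strictly lower order than the gain coming from the nonlinearity. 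The nonlinear term $\int F(U_\varepsilon)\ge D\int U_\varepsilon^{r+1}/(r+1)$ is, by the standard computation, of order $\varepsilon^{(6-r-1)/2}=\varepsilon^{(5-r)/2}$ for $r\in(2,4)$ (with a logarithmic case and order $\varepsilon$ for $r=4$), and this beats the positive $O(\varepsilon)$ contributions precisely when $r\in(4,6)$ — giving case 1) unconditionally — whereas for $r\in(2,4]$ the competing terms are of the same order $\varepsilon$ and one needs the coefficient $D$ large enough so that the negative nonlinear contribution dominates, giving case 2).

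The main obstacle, and the part requiring the most care, is twofold. First, one must control the Born--Infeld coupling term $\int\phi_{U_\varepsilon}U_\varepsilon^2$ precisely enough along the concentrating family: unlike the Schr\"odinger--Maxwell case where $\phi_u$ solves a linear equation with an explicit Riesz-potential representation, here $\phi_u$ solves the nonlinear mean-curvature equation, so one should use the comparison/uniqueness results of \cite{BdP} to bound $\phi_{U_\varepsilon}$ from above by the solution of the corresponding linear problem (or by $0\le\phi_{U_\varepsilon}$ and an $L^\infty$/energy bound), showing this term is $o(\varepsilon)$ or at worst $O(\varepsilon^{1+\delta})$ — enough not to spoil the strict inequality. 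Second, the min-max level $c_\mu$ produced by the monotonicity trick must be shown to be bounded above by this instanton energy \emph{uniformly in $\lambda$ near $1$}, and the limiting solution must be nontrivial: nontriviality follows because $c_\mu>0$ is bounded below by the Mountain Pass geometry, so strong convergence forces $u\neq0$, and then $(u,\phi_u)$ is the desired ground state after a standard comparison of its energy with $c_\mu$.
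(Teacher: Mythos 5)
Your overall scheme (reduce to $I$ on $H^1_r$, prove a strict upper bound $c<\frac13S^{3/2}$ via concentrating instantons, recover compactness below that threshold) is in the same spirit as the paper, but your perturbation is genuinely different: you multiply the whole nonlinearity by $\lambda$ and invoke the Jeanjean--Struwe monotonicity trick with $\lambda\to1^-$, whereas the paper adds the term $\lambda\|u\|_2u$ to the equation (i.e.\ $\frac\lambda3\|u\|_2^3$ to the functional) and lets $\lambda\to0^+$. The problem is that the step you leave unproved is exactly the crux. The monotonicity trick only yields critical points $u_\lambda$ for a.e.\ $\lambda$, and to pass to the limit you must bound $\{u_\lambda\}$ uniformly; with $\varrho\in(3,4)$ in \eqref{f3} this is the delicate point (it is the reason \cite{Azzollini19}, which follows precisely your route, could not reach low powers), and you merely assert it. To close it one needs the combination of the Nehari identity with the Pohozaev identity of Lemma \ref{Lem:poh}, together with the Born--Infeld inequality $\int_{\R^3}(1-\sqrt{1-|\nabla\phi_u|^2})\le\frac12\int_{\R^3}\phi_uu^2$ (inequality \eqref{eq:1/2}), choosing the coefficients as in \eqref{eqn:Pr1}--\eqref{eqn:Pr5}/Remark \ref{rem:I}; without spelling this out your argument does not go beyond the known obstruction. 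Relatedly, your final claim that attaining the mountain-pass level gives a \emph{ground state} is a gap: for $\varrho\in(3,4)$ and with the $\phi_u$-coupling there is no obvious identification of the mountain-pass level with the least-energy level. The paper avoids this by minimizing $I$ over the full set $\mathcal S$ of nontrivial critical points, using Remark \ref{rem:I} to bound a minimizing sequence and re-running the compactness argument (still below $\frac13S^{3/2}$).

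There are also quantitative errors in your threshold estimate, which is the heart of the dichotomy 1)--2). In dimension $3$ the truncated instantons satisfy $\int|\nabla U_\varepsilon|^2=S^{3/2}+O(\varepsilon^{1/2})$, $\int U_\varepsilon^2=O(\varepsilon^{1/2})$, and $\int U_\varepsilon^{s}=O(\varepsilon^{(6-s)/4})$ for $s\in(3,6)$; the competing error is thus $O(\varepsilon^{1/2})$, not $O(\varepsilon)$, and the gain from \eqref{f4} is of order $\varepsilon^{(5-r)/4}$ (or $\varepsilon^{(6-r)/4}$ in the form the paper uses it), not $\varepsilon^{(5-r)/2}$. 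With your stated exponents the comparison would give $r>3$, not the claimed $r>4$, so the case distinction in the theorem is not actually derived from your computation; it must be redone with the correct $3$-dimensional asymptotics (this is Lemma \ref{Lem:estimate} in the paper, where for $r\in(2,4]$ one fixes $\varepsilon$ and takes $D$ large). Finally, your proposed comparison of $\phi_{U_\varepsilon}$ with the solution of a linear problem via \cite{BdP} is unnecessary: the elementary bound $\int\phi_uu^2\le C\|u\|_{12/5}^4$ of Lemma \ref{Lem:tech} already shows the coupling term is $O(\varepsilon)=o(\varepsilon^{1/2})$ along the instantons, and the convergence $E_{u_n}(\phi_{u_n})\to E_u(\phi_u)$ needed in the Palais--Smale analysis is obtained from the minimality of $\phi_u$ and the weak closedness of $X$, as in Lemma \ref{Lem:2-compact}.
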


We remark that, arguing as in \cite{Azzollini19}, all the solutions found
are of class $C^2(\R^3)$,
hence classical.

\medskip

As we said before,
to prove our results we use variational methods.
We point out that the monotonicity trick used in \cite{Azzollini19}
does not permit to conclude the existence of infinitely many solutions
 for the case $p\leq 5/2$, so a new
perturbation technique inspired by \cite{Liu19,Liu19-,Liu19--} is introduced here in order to deal with this case.
We point out that no truncation technique is used here.

This is our main contribution and indeed
we think that this type of perturbations  can be used also for other problems.
Unfortunately we are not able to treat the case $p\in(1,2]$ in \eqref{eqn:SBI}
that we believe it is interesting too.

\medskip

This paper is organized as follows. Firstly, some
 preliminaries are given in Section \ref{sec:prel}. Section \ref{sec:subcritical} is devoted to the subcritical case:
after introducing the modified perturbed equation,
 the existence of ground state solutions and multiplicity of high energy solutions for \eqref{SBI}
 with a general subcritical nonlinearity are shown, then proving Theorem \ref{Thm:EXST} and Theorem
 \ref{Thm:multi}.
 In the final Section \ref{sec:critical} we treat the critical case: again after
 introducing a perturbation in the equation, we prove the existence of a radial ground state solution of
\eqref{SBI} with a general critical nonlinearity.

\subsection*{Notations}
As a matter of notations, we will use $C,C_{1},C_{2},\ldots,C', \ldots$ to denote suitable positive constants
whose value may vary from line to line.

For every $1\leq s\leq +\infty$, we denote by $\|\cdot\|_s$ the usual
norm of the Lebesgue space $L^s(\R^3)$ and use $\|\cdot\|$
for the standard norm in $H^{1}_{r}(\mathbb R^{3})$.
We will use the ``small o'' notation for  vanishing sequences.
Other notations will be introduced whenever we need.

\s{Preliminary results}\label{sec:prel}

We recall some properties of the ambient space $X$.
For the proofs see \cite{BdP} or \cite[Lemma 2.1 and Remark 2.3]{Azzollini19}.
\begin{lemma}\label{Lem:compact0}
The following conclusions hold:
\begin{itemize}
	
	\item [$(i)$] $X$ is continuously embedded in $W^{1,p}(\R^3)$ for all $p\in[6,+\infty)$; \smallskip
	
	\item [$(ii)$]   $X$ is continuously embedded in $L^{\infty}(\R^3)$;\smallskip
	
	\item [$(iii)$]  if $\phi\in X$, then $\lim_{|x|\rightarrow\infty}\phi(x)=0$;\smallskip

	\item [$(iv)$] $X$ is weakly closed;\smallskip
	\item [$(v)$] if $\{\phi_n\}\subset X$ is  bounded, there exists $\phi\in X$ such that, up to subsequence,
$\phi_n\rightharpoonup\phi$ weakly in $X$ and uniformly in compact sets in $\mathbb R^{3}$;\smallskip
\item[$(vi)$] if $u_{n}\to u$ in $L^{p}(\mathbb R^{3}), p\in[1,+\infty)$, then $\phi_{u_{n}}\to \phi_{u}$ in $L^{\infty}(\mathbb R^{3})$.
\end{itemize}
\end{lemma}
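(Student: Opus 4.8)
The plan is to play the two constraints defining $X$ off against each other: membership in $D^{1,2}(\R^3)$ gives $L^6$-control and Hilbert-space weak compactness, while the pointwise bound $\|\nabla\phi\|_\infty\le1$ turns every bounded family in $X$ into an equi-Lipschitz family and lets one interpolate gradients into every $L^p$ with $p\ge2$. For $(i)$--$(iii)$ the crux is the elementary estimate $\|\phi\|_\infty\le C\|\nabla\phi\|_2^{2/3}$ for $\phi\in X$: if $|\phi(x_0)|=M$ then $|\nabla\phi|\le1$ forces $|\phi|\ge M/2$ on $B(x_0,M/2)$, so $\|\phi\|_6^6\ge cM^9$, and Sobolev's inequality $\|\phi\|_6\le S\|\nabla\phi\|_2$ bounds $M$; this is $(ii)$. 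The same ball argument along a sequence $|x_n|\to\infty$ (passing to a subsequence along which the balls are disjoint) shows $\phi(x_n)\to0$, giving $(iii)$. For $(i)$, interpolation yields $\|\nabla\phi\|_p\le\|\nabla\phi\|_2^{2/p}\|\nabla\phi\|_\infty^{1-2/p}\le\|\nabla\phi\|_2^{2/p}$ for all $p\ge2$, while $\phi\in L^6\cap L^\infty$ gives $\phi\in L^p$ for all $p\ge6$, all norms being controlled by $\|\nabla\phi\|_2$; this is exactly the continuous embedding $X\hookrightarrow W^{1,p}(\R^3)$ for $p\in[6,\infty)$.

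Items $(iv)$ and $(v)$ are soft. The set $X$ is a convex subset of the Hilbert space $D^{1,2}(\R^3)$ which is strongly closed: if $\phi_n\to\phi$ in $D^{1,2}$, a subsequence has $\nabla\phi_n\to\nabla\phi$ a.e., whence $\|\nabla\phi\|_\infty\le1$, and a function on $\R^3$ with bounded gradient of $L^\infty$-norm $\le1$ has a $1$-Lipschitz representative; a convex strongly closed set is weakly closed, which is $(iv)$. For $(v)$, a set bounded in the reflexive space $D^{1,2}(\R^3)$ has a weakly convergent subsequence, whose limit lies in $X$ by $(iv)$; since the $\phi_n$ are equi-Lipschitz and, by the previous step, uniformly bounded in $L^\infty$, Arzelà--Ascoli extracts a further subsequence converging locally uniformly, and this local-uniform limit agrees with the weak $D^{1,2}$-limit because both coincide as distributions.

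Item $(vi)$ is the delicate point, and I would avoid the PDE, working instead with the variational characterization from \cite{BdP}: $\phi_u$ is the unique element of $X_r$ minimizing $\phi\mapsto\int_{\R^3}(1-\sqrt{1-|\nabla\phi|^2})-\int_{\R^3}\phi u^2$. Testing with $\phi=\phi_u$ and using $1-\sqrt{1-s}\ge s/2$ on $[0,1]$ together with Sobolev/Hölder gives the a priori bound $\|\nabla\phi_u\|_2\le C\|u\|_{12/5}^2$. Given $u_n\to u$ in $L^p$ (in the applications accompanied by an $H^1_r$-bound), one gets $u_n^2\to u^2$ in $L^{6/5}(\R^3)$, so $\{\phi_{u_n}\}$ is bounded in $X_r$; by $(v)$ a subsequence converges weakly in $D^{1,2}$ and locally uniformly to some $\tilde\phi\in X_r$. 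Passing to the limit in the minimization inequality — lower semicontinuity of $\phi\mapsto\int_{\R^3}(1-\sqrt{1-|\nabla\phi|^2})$ for the weak $D^{1,2}$-topology, plus convergence of the linear term — identifies $\tilde\phi$ as a minimizer for the datum $u^2$, hence $\tilde\phi=\phi_u$ by the uniqueness of \cite{BdP}, and independence of the subsequence upgrades this to convergence of the whole sequence. Local-uniform convergence is then promoted to $L^\infty$-convergence because the decay behind $(iii)$ is uniform in $n$ (it uses only $\|\nabla\phi_{u_n}\|_\infty\le1$ and the bound on $\|\phi_{u_n}\|_6$), so the tails $\sup_{|x|\ge R}|\phi_{u_n}(x)|$ are uniformly small. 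I expect this last item to be the main obstacle: weak $D^{1,2}$-convergence of $\phi_{u_n}$ does not in itself yield convergence of the mean-curvature field $\nabla\phi_{u_n}/\sqrt{1-|\nabla\phi_{u_n}|^2}$, so one cannot simply pass to the limit in the Euler--Lagrange equation; the way around is to argue at the level of energies (semicontinuity and comparison) and then to pin down the limit via the uniqueness theorem of \cite{BdP}, everything else reducing to the single elementary $L^6$-plus-Lipschitz estimate of the first step and standard compactness.
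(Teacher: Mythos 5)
The paper itself gives no proof of this lemma: it is quoted verbatim from \cite{BdP} and \cite[Lemma 2.1 and Remark 2.3]{Azzollini19}, so your argument can only be compared with those references. For items $(i)$--$(v)$ your proof is correct and is essentially the standard one: the estimate $\|\phi\|_\infty\le C\|\nabla\phi\|_2^{2/3}$ via the $1$-Lipschitz ball argument plus Sobolev, the same argument for decay along disjoint balls, interpolation $\|\nabla\phi\|_p\le\|\nabla\phi\|_2^{2/p}$ and $\phi\in L^6\cap L^\infty$ for the embedding, convexity plus strong closedness for weak closedness, and Arzel\`a--Ascoli for $(v)$.

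The genuine gap is in $(vi)$, in the final promotion from locally uniform to global $L^\infty$ convergence. Uniform smallness of $\sup_{|x|\ge R}|\phi_{u_n}(x)|$ does \emph{not} follow from $\|\nabla\phi_{u_n}\|_\infty\le1$ together with boundedness of $\|\phi_{u_n}\|_6$: translated copies of a fixed element of $X$ satisfy both and do not decay uniformly. What the ball argument converts into uniform decay is uniform smallness of the tails $\int_{|x|\ge R}|\phi_{u_n}|^6$, and your compactness-plus-uniqueness scheme (which, note, only uses uniqueness of the minimizer, valid for general $u$ by strict convexity, not the radial PDE uniqueness) delivers weak $D^{1,2}$ and locally uniform convergence but no such tail control. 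A second, smaller, mismatch is that you need $u_n^2\to u^2$ in $L^{6/5}$, which you obtain by importing an $H^1$-bound that is not in the statement. Both defects disappear if you argue quantitatively: since $1-\sqrt{1-|\xi|^2}$ has Hessian $\ge \mathrm{Id}$ on the unit ball, comparing each of the two minimizers $\phi_{u_n},\phi_u$ with the midpoint $\tfrac12(\phi_{u_n}+\phi_u)\in X$ and summing the two resulting inequalities gives
\[
\tfrac14\|\nabla(\phi_{u_n}-\phi_u)\|_2^2\le\tfrac12\int_{\R^3}(\phi_u-\phi_{u_n})(u^2-u_n^2)
\le C\|\nabla(\phi_{u_n}-\phi_u)\|_2\,\|u_n^2-u^2\|_{6/5},
\]
hence $\|\nabla(\phi_{u_n}-\phi_u)\|_2\le C\|u_n^2-u^2\|_{6/5}$; then, since $\phi_{u_n}-\phi_u$ is $2$-Lipschitz, your own ball argument applied to the difference yields $\|\phi_{u_n}-\phi_u\|_\infty\le C\|\phi_{u_n}-\phi_u\|_6^{2/3}\le C'\|\nabla(\phi_{u_n}-\phi_u)\|_2^{2/3}\to0$. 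This single estimate replaces the identification-by-uniqueness step and the missing uniform-decay step at once, and it makes explicit the hypothesis actually used, namely $u_n^2\to u^2$ in $L^{6/5}$ (equivalently $u_n\to u$ in $L^{12/5}$, which is what the applications in the paper supply).
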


We define weak solutions to \eqref{SBI} in the following way.
\begin{definition}\label{Def:compact0}
A weak solution of \eqref{SBI} is a couple $(u,\phi)\in H^1(\R^3)\times X$ such that for all
$(v,\psi)\in C_0^\infty(\R^3)\times C_0^\infty(\R^3)$, we have
$$
 \begin{array}{ll}
\displaystyle \int_{\R^3}(\nabla u\nabla v+uv)+\int_{\R^3}\phi_uuv=\int_{\R^3}f(u)v+\mu\int_{\R^3}|u|^4u\psi, \medskip\\
\displaystyle \int_{\R^3}\frac{\nabla\phi\nabla\psi}{\sqrt{1-|\nabla\phi|^2}}=\int_{\R^3}u^2\psi.
\end{array}
$$
We can even allow here $v,\psi\in H^{1}(\mathbb R^{3})$.
\end{definition}

The next facts are also known, see \cite[Lemma 2.2]{Azzollini19}.

\begin{lemma}\label{Lem:phi}
For any $u\in H^1(\R^3)$ fixed, there exists a unique $\phi_u\in X$ such that the following properties hold:
\begin{enumerate}[label=(\roman*),ref=\roman*]
\item \label{Lem:phi-i}  $\phi_u$ is the unique minimizer of $E_u:X\rightarrow\R$ defined as
$$
E_u(\phi)=\int_{\R^3}(1-\sqrt{1-|\nabla\phi|^2})-\int_{\R^3}\phi u^2,
$$
 and $E_u(\phi_u)\leq 0$, that is,
	$$
\int_{\R^3}\phi_uu^2\geq \int_{\R^3}(1-\sqrt{1-|\nabla\phi_u|^2});
    $$
	\item \label{Lem:phi-ii}
	  $\phi_u\geq0$ and $\phi_u=0$ if and only if $u=0$;
	
	\item  \label{Lem:phi-iii} if $\phi$ is a weak solution of the second equation of \eqref{SBI}, then $\phi=\phi_u$ and satisfies the following
equality
$$
\int_{\R^3} \frac{|\nabla \phi_u|^2}{\sqrt{1-|\nabla \phi_u|^2}}=\int_{\R^3}\phi_uu^2.
$$
Moreover, if $u\in H_r^1(\R^3)$, then $\phi_u\in X_r$ is the unique weak solution of the second equation of system
\eqref{SBI}.
\end{enumerate}
\end{lemma}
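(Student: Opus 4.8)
The plan is to obtain $\phi_u$ as the unique minimiser of $E_u$ over the convex set $X$ by the direct method, and then to deduce every remaining property from this minimality. First I would check that $E_u$ is well defined, coercive and sequentially weakly lower semicontinuous on $X$. Well-definedness is immediate: since $|\nabla\phi|\le 1$ a.e.\ and $0\le 1-\sqrt{1-t}\le t$ on $[0,1]$, the first integral is finite and bounded by $\|\nabla\phi\|_2^2$, while the second is finite because $\phi\in L^6(\R^3)$ (as $\phi\in D^{1,2}(\R^3)$) and $u^2\in L^{6/5}(\R^3)$. For coercivity I would use the elementary inequality $1-\sqrt{1-t}\ge t/2$ on $[0,1]$ (equivalently $(1-\sqrt{1-t})^2\ge 0$), which gives $\int_{\R^3}(1-\sqrt{1-|\nabla\phi|^2})\ge\tfrac12\|\nabla\phi\|_2^2$, together with $\int_{\R^3}\phi u^2\le\|\phi\|_6\|u\|_{12/5}^2\le C\|\nabla\phi\|_2\|u\|^2$ by H\"older and Sobolev; hence $E_u(\phi)\to+\infty$ as $\|\nabla\phi\|_2\to\infty$. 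Weak lower semicontinuity holds because $\xi\mapsto 1-\sqrt{1-|\xi|^2}$ is convex on the closed unit ball of $\R^3$ (its graph being the upper unit hemisphere), so the first term of $E_u$ is convex and strongly continuous, hence weakly lower semicontinuous, while $\phi\mapsto\int_{\R^3}\phi u^2$ is weakly continuous on $D^{1,2}(\R^3)$. Since $X$ is convex and weakly closed (Lemma \ref{Lem:compact0}, parts $(iv)$ and $(v)$), a minimising sequence is bounded, admits a weak limit $\phi_u\in X$, and $\phi_u$ is a minimiser. Uniqueness then follows from the \emph{strict} convexity of $\xi\mapsto 1-\sqrt{1-|\xi|^2}$ on the closed unit ball: if $\phi_1\ne\phi_2$ were two minimisers then $\nabla\phi_1\ne\nabla\phi_2$ on a set of positive measure, and evaluating strict convexity at the midpoint there gives $E_u(\tfrac12(\phi_1+\phi_2))<\tfrac12 E_u(\phi_1)+\tfrac12 E_u(\phi_2)$, a contradiction. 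Testing against the admissible competitor $\phi\equiv0$ gives $E_u(\phi_u)\le E_u(0)=0$, which is part $(i)$.

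For part $(ii)$ I would use a truncation argument. Since $\nabla\phi_u^+=\nabla\phi_u$ on $\{\phi_u>0\}$ and vanishes elsewhere, the positive part $\phi_u^+$ again belongs to $X$, and a direct computation gives $E_u(\phi_u^+)-E_u(\phi_u)=-\int_{\{\phi_u<0\}}(1-\sqrt{1-|\nabla\phi_u|^2})+\int_{\{\phi_u<0\}}\phi_u u^2\le0$; by minimality and uniqueness, $\phi_u=\phi_u^+\ge0$. If $u=0$ then $E_0\ge0$ and vanishes only at $\phi=0$, so $\phi_0=0$; conversely, if $\phi_u=0$, then for $\psi\in C_0^\infty(\R^3)$ with $\psi\ge0$ and $t>0$ small we have $t\psi\in X$ and $E_u(t\psi)\le t^2\|\nabla\psi\|_2^2-t\int_{\R^3}\psi u^2$, which is $<0=E_u(0)$ for $t$ small unless $\int_{\R^3}\psi u^2=0$; since $\psi\ge0$ is arbitrary this forces $u\equiv0$.

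It remains to prove part $(iii)$, and this is where the real difficulty lies. Formally $\phi_u$ satisfies the Euler--Lagrange equation $-\text{div}\,(\nabla\phi_u/\sqrt{1-|\nabla\phi_u|^2})=u^2$ in the weak sense, but to justify the first variation one needs enough room to perturb $\phi_u$ inside $X$, i.e.\ regularity information of the form $\|\nabla\phi_u\|_\infty<1$; this is exactly the analysis carried out in \cite{BdP}, which I would invoke here (and which also yields the uniqueness of the weak solution of the second equation within the radial class). Granting this, the integral identity $\int_{\R^3}|\nabla\phi_u|^2/\sqrt{1-|\nabla\phi_u|^2}=\int_{\R^3}\phi_u u^2$ follows by using $\phi_u$ itself as a test function, which is admissible since $\phi_u\in X\subset W^{1,p}(\R^3)$ by Lemma \ref{Lem:compact0}. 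Uniqueness of the weak solution and its identification with $\phi_u$ then come from the strict monotonicity of the operator $\phi\mapsto-\text{div}\,(\nabla\phi/\sqrt{1-|\nabla\phi|^2})$, which is the differential of the strictly convex functional $\int_{\R^3}(1-\sqrt{1-|\nabla\phi|^2})$: testing the difference of two weak solutions against their difference gives $\nabla\phi_1=\nabla\phi_2$ a.e., hence $\phi_1=\phi_2$. Finally, if $u$ is radial then for every rotation $R$ the function $\phi_u\circ R$ is again a minimiser of $E_u$ (which is rotation invariant), so uniqueness forces $\phi_u\circ R=\phi_u$, that is $\phi_u\in X_r$, and the last assertion then follows with the radial uniqueness of \cite{BdP}.

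I expect the main obstacle to be precisely the step just flagged: controlling $\nabla\phi_u$ near the constraint set $\{|\nabla\phi|=1\}$ in order to legitimately pass from the minimisation to the Euler--Lagrange equation (and to test it against $\phi_u$ and against differences of solutions). This is the genuinely nontrivial analytic input, supplied by \cite{BdP}; the existence, uniqueness and positivity of the minimiser, by contrast, are soft consequences of convexity and the direct method.
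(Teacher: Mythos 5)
Your proposal is correct in outline, but note that the paper itself does not prove this lemma at all: it simply recalls it as known, citing \cite{BdP} and \cite[Lemma 2.2]{Azzollini19}. Your reconstruction matches how those references argue: the direct method with coercivity from $1-\sqrt{1-t}\ge t/2$, weak lower semicontinuity and weak closedness of $X$, uniqueness from strict convexity of $\xi\mapsto 1-\sqrt{1-|\xi|^2}$ on the closed unit ball, the competitor $\phi\equiv 0$ for part (i), truncation by $\phi_u^{+}$ and the small-perturbation argument $t\psi$ for part (ii), and rotation invariance plus uniqueness for radial symmetry. You also correctly isolate the genuinely hard point in (iii): the minimiser cannot be shown to satisfy the Euler--Lagrange equation by naive first variation because of the constraint $\|\nabla\phi\|_\infty\le 1$, and this (together with the radial existence statement) is exactly the input from \cite{BdP}. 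Two small caveats you should make explicit if you write this up: first, $\phi_u$ is in general not in $H^1(\R^3)$ (it decays like $|x|^{-1}$), and more importantly testing the second equation with $\phi_u$ itself, or testing the difference of two weak solutions against their difference, is not automatic from a weak formulation posed against $C_0^\infty$ functions --- one needs the integrability/duality framework for $\nabla\phi/\sqrt{1-|\nabla\phi|^2}$ developed in \cite{BdP} (this is part of what you are implicitly importing when you ``grant'' the regularity step); second, in the identification of weak solutions with the minimiser one can alternatively avoid monotonicity and use the pointwise convexity inequality $g(\xi_2)\ge g(\xi_1)+\nabla g(\xi_1)\cdot(\xi_2-\xi_1)$ integrated against the weak formulation, which is how the cited sources phrase it; both routes are fine once the admissible test functions are sorted out.
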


Actually the inequality in \eqref{Lem:phi-i} can be improved. Indeed,
since for all $t\in[0,1)$, the following holds:
\begin{equation}\label{eqn:PS2}
1-\sqrt{1-t}\leq\frac{1}{2}\frac{t}{\sqrt{1-t}},
\end{equation}
by recalling \eqref{Lem:phi-iii}  in Lemma \ref{Lem:phi}, it follows
\begin{equation}\label{eq:1/2}
\frac{1}{2}\int_{\mathbb R^{3}} \phi_{u}u^{2} = \frac{1}{2}\int_{\R^3} \frac{|\nabla \phi_u|^2}{\sqrt{1-|\nabla \phi_u|^2}}
\geq  \int_{\R^3}(1-\sqrt{1-|\nabla\phi_u|^2}).
\end{equation}

We recall also a Pohozaev type identity associated with \eqref{SBI} whose proof
can be obtained as in \cite{Azzollini19}.
\begin{lemma}\label{Lem:poh}
If $(u,\phi)\in H^1_r(\R^3)\times X_r$ is a solution of \eqref{SBI}, then the following Pohozaev type identity is satisfied:
\begin{equation*}
\frac{1}{2}\int_{\R^3}|\nabla u|^2+\frac{3}{2}\int_{\R^3}u^2+2\int_{\R^3}\phi u^2
-\frac{3}{2}\int_{\R^3}(1-\sqrt{1-|\nabla \phi|^2})=3\int_{\R^3}F(u)+\frac{\mu}{2}\int_{\R^3}|u|^6.
\end{equation*}
\end{lemma}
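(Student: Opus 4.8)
The plan is to run the standard Derrick--Pohozaev argument for the coupled system: test the first equation of \eqref{SBI} against the Pohozaev multiplier $x\cdot\nabla u$, test the second against $x\cdot\nabla\phi$, integrate both over $\R^3$, and combine. Since $x\cdot\nabla u$ and $x\cdot\nabla\phi$ grow linearly they are not admissible test functions, so rigorously one multiplies the integrands by a cut-off $\zeta(\cdot/R)$, $\zeta\in C_0^\infty(\R^3)$ with $\zeta\equiv1$ on $B_1$, carries out all the integrations by parts, and lets $R\to\infty$. The error terms produced by $\nabla\zeta(\cdot/R)$ are supported in an annulus of radius $\sim R$ and vanish in the limit thanks to the regularity and the decay of the solution (exponential decay of $u$ and $\nabla u$ from the mass term $+u$, Coulomb-type decay of $\phi$ and $\nabla\phi$ from \cite{BdP}); the same decay makes all boundary integrals negligible. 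Below I only record the formal computation.

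First I would handle the Schr\"odinger equation. Using the classical identities $\int_{\R^3}(-\Delta u)(x\cdot\nabla u)=-\tfrac12\int_{\R^3}|\nabla u|^2$ (in dimension three) and $\int_{\R^3}g(u)(x\cdot\nabla u)=-3\int_{\R^3}G(u)$ with $G'=g$, applied to $u$, $f(u)$ and $|u|^4u$, together with
$$
\int_{\R^3}\phi u\,(x\cdot\nabla u)=\frac12\int_{\R^3}\phi\,\big(x\cdot\nabla(u^2)\big)=-\frac32\int_{\R^3}\phi u^2-\frac12\int_{\R^3}(x\cdot\nabla\phi)u^2,
$$
one obtains
$$
\frac12\int_{\R^3}|\nabla u|^2+\frac32\int_{\R^3}u^2+\frac32\int_{\R^3}\phi u^2+\frac12\int_{\R^3}(x\cdot\nabla\phi)u^2=3\int_{\R^3}F(u)+\frac{\mu}{2}\int_{\R^3}|u|^6 .
$$
The only genuinely new quantity here is the cross term $\int_{\R^3}(x\cdot\nabla\phi)u^2$, which I will eliminate with the help of the second equation.

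Next I would handle the mean-curvature equation, which is the nonstandard point. Testing $-\text{div}\big(\nabla\phi/\sqrt{1-|\nabla\phi|^2}\big)=u^2$ against $x\cdot\nabla\phi$ and integrating by parts, the left-hand side becomes $\int_{\R^3}\frac{\nabla\phi}{\sqrt{1-|\nabla\phi|^2}}\cdot\nabla(x\cdot\nabla\phi)$; writing $\nabla(x\cdot\nabla\phi)=\nabla\phi+(D^2\phi)\,x$, the first term gives $\int_{\R^3}\frac{|\nabla\phi|^2}{\sqrt{1-|\nabla\phi|^2}}$, while for the second I use
$$
\frac{\nabla\phi}{\sqrt{1-|\nabla\phi|^2}}\cdot(D^2\phi)\,x=x\cdot\nabla\Big(1-\sqrt{1-|\nabla\phi|^2}\Big),
$$
which holds because $\partial_j\big(1-\sqrt{1-|\nabla\phi|^2}\big)=\partial_j(|\nabla\phi|^2)\big/\big(2\sqrt{1-|\nabla\phi|^2}\big)$; a further integration by parts turns it into $-3\int_{\R^3}\big(1-\sqrt{1-|\nabla\phi|^2}\big)$. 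Hence
$$
\int_{\R^3}\frac{|\nabla\phi|^2}{\sqrt{1-|\nabla\phi|^2}}-3\int_{\R^3}\big(1-\sqrt{1-|\nabla\phi|^2}\big)=\int_{\R^3}(x\cdot\nabla\phi)u^2 .
$$
Invoking Lemma~\ref{Lem:phi}\eqref{Lem:phi-iii}, i.e. $\int_{\R^3}\frac{|\nabla\phi|^2}{\sqrt{1-|\nabla\phi|^2}}=\int_{\R^3}\phi u^2$, this expresses $\int_{\R^3}(x\cdot\nabla\phi)u^2$ as $\int_{\R^3}\phi u^2-3\int_{\R^3}\big(1-\sqrt{1-|\nabla\phi|^2}\big)$; substituting it into the first identity and collecting terms yields precisely the asserted Pohozaev identity.

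The main obstacle is not the algebra but making the two integrations by parts legitimate: one needs a priori that $(u,\phi)$ is regular enough for $D^2\phi$ and $x\cdot\nabla u$ to make pointwise sense, that in particular $\|\nabla\phi\|_\infty<1$ so that $1/\sqrt{1-|\nabla\phi|^2}$ is a bounded continuous field away from infinity, and one needs quantitative decay of $u,\nabla u,\phi,\nabla\phi$ to annihilate the truncation errors and the boundary integrals. This is exactly what the regularity and decay theory for \eqref{SBI} provides, developed as in \cite{BdP,Azzollini19}; granting it, the computation above is rigorous.
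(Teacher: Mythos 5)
Your computation is correct: the multiplier argument with $x\cdot\nabla u$ and $x\cdot\nabla\phi$, the elimination of the cross term $\int_{\R^3}(x\cdot\nabla\phi)u^2$ via the second equation together with Lemma \ref{Lem:phi}\eqref{Lem:phi-iii}, and the cut-off/decay justification reproduce exactly the asserted identity, and the coefficients all check out. This is essentially the same route the paper takes, since it gives no independent proof but refers to \cite{Azzollini19}, where the identity is obtained by this classical Pohozaev computation using the $C^2$ regularity and decay of solutions.
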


In view of Lemma \ref{Lem:phi}, we know that the associated energy functional can be written
in the following form
\begin{equation}\label{eq:I}
I(u)=\frac{1}{2}\int_{\R^3}(|\nabla u|^2+u^2)+\frac{1}{2}\int_{\R^3}\phi_uu^2
-\int_{\R^3}F(u)-\frac{1}{2}\int_{\R^3}(1-\sqrt{1-|\nabla \phi_u|^2})-\frac{\mu}{6}\int_{\R^3}|u|^6,
\end{equation}
and is of class $C^1$. It is known that if $u$ is the critical point of $I$,
then $(u,\phi_u)$ is a weak solution of \eqref{SBI}. For this reason
we will speak simply of  $u$ as a  solution of \eqref{SBI}.

Finally we recall a technical lemma which is of use in studying the geometry of the functional,
see \cite[Lemma 2.7]{Azzollini19}.
\begin{lemma}\label{Lem:tech}
Let $s\in [2,3)$. Then there exist positive constants $C$ and $C'$ such that for any $u\in H^1(\R^3)$, we have
$$
\|\nabla \phi_u\|_2^{\frac{s-1}{s}}\leq C\|u\|_{2(s^*)'}\leq C'\|u\|,
$$
where $s^*$ is the critical Sobolev exponent related to $s$ and $(s^*)'$ is its conjugate exponent, namely
$$
s^*=\frac{3s}{3-s}\quad\text{and}\quad(s^*)'=\frac{3s}{4s-3}.
$$
In particular,
$$\int_{\mathbb R^{3}} \phi_{u} u ^{2}\leq \|\phi_{u}\|_{6}\| u\|_{12/5}^{2}\leq
\|\nabla \phi_{u}\|_{6}\| u\|_{12/5}^{2}\leq C' \|u\|_{12/5}^{4}.$$
\end{lemma}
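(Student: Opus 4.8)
The plan is to read off the estimate from the Euler--Lagrange identity for $\phi_u$ in Lemma~\ref{Lem:phi}\,\eqref{Lem:phi-iii},
$$
\int_{\R^3}\frac{|\nabla\phi_u|^2}{\sqrt{1-|\nabla\phi_u|^2}}=\int_{\R^3}\phi_u u^2,
$$
exploiting the two structural features of $X$: the pointwise bound $\|\nabla\phi_u\|_\infty\leq1$ and the Sobolev inequalities. Since $0\leq\sqrt{1-|\nabla\phi_u|^2}\leq1$ almost everywhere, the integrand on the left dominates $|\nabla\phi_u|^2$, so the identity gives at once
$$
\|\nabla\phi_u\|_2^2\leq\int_{\R^3}\phi_u u^2.
$$
If $u=0$ then $\phi_u=0$ by Lemma~\ref{Lem:phi}\,\eqref{Lem:phi-ii} and everything is trivial; so I may assume $u\neq0$, whence $\phi_u\neq0$ and $\|\nabla\phi_u\|_2>0$.

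Next I would bound the right-hand side by H\"older's inequality with the conjugate pair $(s^*,(s^*)')$,
$$
\int_{\R^3}\phi_u u^2\leq\|\phi_u\|_{s^*}\,\|u\|_{2(s^*)'}^{2},
$$
and then control $\|\phi_u\|_{s^*}$ by $\|\nabla\phi_u\|_2$. The key device is that $|\nabla\phi_u|\leq1$ upgrades the $L^2$ gradient bound to an $L^s$ bound for every $s\geq2$: indeed $\|\nabla\phi_u\|_s^{s}=\int_{\R^3}|\nabla\phi_u|^{s-2}|\nabla\phi_u|^{2}\leq\int_{\R^3}|\nabla\phi_u|^{2}$, i.e.\ $\|\nabla\phi_u\|_s\leq\|\nabla\phi_u\|_2^{2/s}$. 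Because $s<3$, the subcritical Sobolev inequality for $W^{1,s}(\R^3)$ gives $\|\phi_u\|_{s^*}\leq C\|\nabla\phi_u\|_s\leq C\|\nabla\phi_u\|_2^{2/s}$; this is legitimate since $\phi_u\in D^{1,2}(\R^3)$ with $\nabla\phi_u\in L^s$ and $\phi_u(x)\to0$ as $|x|\to\infty$ by Lemma~\ref{Lem:compact0}, so $\phi_u$ lies in the relevant homogeneous space. Inserting this into the previous chain yields $\|\nabla\phi_u\|_2^{2}\leq C\|\nabla\phi_u\|_2^{2/s}\|u\|_{2(s^*)'}^{2}$; dividing by the positive factor $\|\nabla\phi_u\|_2^{2/s}$ gives $\|\nabla\phi_u\|_2^{2(s-1)/s}\leq C\|u\|_{2(s^*)'}^{2}$, which is the first inequality after taking a square root.

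For the second inequality I would only need to locate the exponent: a direct computation gives $2(s^*)'=6s/(4s-3)$, a decreasing function of $s$ taking values in $(2,12/5]$ as $s$ ranges over $[2,3)$, hence $2(s^*)'\in[2,6]$ and the continuous embedding $H^1(\R^3)\hookrightarrow L^{2(s^*)'}(\R^3)$ yields $\|u\|_{2(s^*)'}\leq C'\|u\|$. For the final ``in particular'' assertion I would specialise to $s=2$, so that $s^*=6$ and $2(s^*)'=12/5$; the first inequality then reads $\|\nabla\phi_u\|_2\leq C\|u\|_{12/5}^{2}$, and combining the H\"older step $\int_{\R^3}\phi_u u^2\leq\|\phi_u\|_6\|u\|_{12/5}^{2}$ with the $D^{1,2}$ Sobolev bound $\|\phi_u\|_6\leq C\|\nabla\phi_u\|_2$ produces the displayed chain ending in $C'\|u\|_{12/5}^{4}$.

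The one genuinely delicate point is the middle estimate $\|\phi_u\|_{s^*}\leq C\|\nabla\phi_u\|_2$: one cannot use the plain $D^{1,2}$ embedding here because in general $s^*>6$, and what saves the day is precisely the constraint $|\nabla\phi_u|\leq1$, which transfers control from $L^2$ to $L^s$ and lets the subcritical Sobolev inequality for $W^{1,s}$ take over. Verifying that this inequality indeed applies to $\phi_u$ (which a priori belongs to $X$ rather than to $C_0^\infty$) is the only technical care required, and it is routine given the decay and regularity of $\phi_u$ recorded in Lemma~\ref{Lem:compact0}.
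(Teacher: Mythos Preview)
Your argument is correct: the Euler--Lagrange identity from Lemma~\ref{Lem:phi}\,\eqref{Lem:phi-iii} combined with $|\nabla\phi_u|\leq1$ (to pass from $L^2$ to $L^s$ gradients) and the Sobolev inequality in $\dot W^{1,s}(\R^3)$ is exactly the right chain, and the check that $2(s^*)'\in(2,12/5]\subset[2,6]$ cleanly justifies the embedding $H^1\hookrightarrow L^{2(s^*)'}$.

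The paper does not actually supply a proof of this lemma: it simply quotes the result from \cite[Lemma~2.7]{Azzollini19}. Your derivation is the standard one and is essentially what appears in that reference. One minor discrepancy: the intermediate inequality $\|\phi_u\|_6\leq\|\nabla\phi_u\|_6$ in the ``in particular'' chain (as printed in the paper) is not the step you use---you go through $\|\phi_u\|_6\leq C\|\nabla\phi_u\|_2$ instead---but your route is the cleaner one and yields the same final bound $\int\phi_u u^2\leq C'\|u\|_{12/5}^{4}$, which is all that is ever used downstream.
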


\section{The Subcritical Case}\label{sec:subcritical}
\setcounter{equation}{0}
\subsection{The modified equation}
In this section, we are planning to study \eqref{SBI} in the subcritical case, that is when
$\mu=0$. Here,
 we introduce \emph{a perturbation technique} to overcome
 the difficulty
related to the  boundedness of Palais-Smale sequence.
 \eqref{SBI}.
We consider the following modified problem
\begin{equation}\label{eqn:2-spos}
\left\{
 \begin{array}{ll}
-\triangle u+u+\phi u+\lambda\|u\|_2u=f(u)+\lambda |u|^{q-1}u\,\,&\mbox{in}\,\,\R^3,\medskip\\
-\text{div}\bigg(\displaystyle\frac{\nabla\phi}{\sqrt{1-|\nabla\phi|^2}}\bigg)=u^2&\mbox{in}\,\,\R^3,\medskip\\
u(x)\rightarrow0,\,\,\phi(x)\rightarrow0,&\,\text{as}\,\,x\rightarrow\infty,
\end{array}
\right.
\end{equation}
where $\lambda\in(0,1]$ 
and $q\in(\max\{p,4\},5)$. Thus,
its associated functional is
$$
I_{\lambda}(u)=I(u)+\frac{\lambda}{3}\|u\|_2^{3}
-\frac{\lambda}{q+1}\int_{\R^3}|u|^{q+1},
$$
where $I$ is defined in \eqref{eq:I}, with $\mu=0$.

The idea is to find solutions for \eqref{eqn:2-spos} and then send $\lambda$ to $0$.

We now verify that the functional $I_\lambda$ has  the Mountain Pass geometry
uniformly in $\lambda$.

\begin{lemma}\label{Lem:MP1}
Suppose that \eqref{f1}-\eqref{f3}  hold.
 Then 
 \begin{enumerate}[label=(\roman{*}), ref=\roman{*}]
 \item\label{MPi} there exist $\rho,\delta>0$ such that, for any $\lambda\in(0,1]$,
  $I_{\lambda}(u)\geq \delta$ for every $u\in S_\rho=\{u\in E: \|u\|=\rho\}$;
 \medskip
 \item\label{MPii} there is $v\in H_r^1(\R^3)$ with $\|v\|>\rho$ such that,
 for any $\lambda\in(0,1]$, $I_{\lambda}(v)<0$.
 \end{enumerate}
\end{lemma}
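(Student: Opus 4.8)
The plan is to read both Mountain Pass conditions directly off the expression for $I_\lambda$, using throughout that the perturbation terms $\frac{\lambda}{3}\|u\|_2^{3}$ and $-\frac{\lambda}{q+1}\int_{\R^3}|u|^{q+1}$ are respectively nonnegative and nonpositive, and that $\lambda\le1$, so that the relevant estimates come out independent of $\lambda$.

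For part \ref{MPi} I would first discard the favourable terms. By \eqref{eq:1/2},
$$
\frac12\int_{\R^3}\phi_u u^2-\frac12\int_{\R^3}\bigl(1-\sqrt{1-|\nabla\phi_u|^2}\bigr)\ \ge\ \frac14\int_{\R^3}\phi_u u^2\ \ge\ 0 ,
$$
so that, dropping also $\frac{\lambda}{3}\|u\|_2^{3}\ge0$ and using $\lambda\le1$ in the last term,
$$
I_\lambda(u)\ \ge\ \tfrac12\|u\|^2-\int_{\R^3}F(u)-\tfrac1{q+1}\int_{\R^3}|u|^{q+1} .
$$
Next I would insert the bound $|F(s)|\le\varepsilon s^2+C_\varepsilon|s|^{p+1}$ from \eqref{eq:fF} together with the Sobolev embeddings $H^1_r(\R^3)\hookrightarrow L^{p+1}(\R^3)$ and $H^1_r(\R^3)\hookrightarrow L^{q+1}(\R^3)$ (both exponents lie in $(2,6)$), choose $\varepsilon$ small, and obtain $I_\lambda(u)\ge\frac14\|u\|^2-C_1\|u\|^{p+1}-C_2\|u\|^{q+1}$ with $C_1,C_2$ not depending on $\lambda$. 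Since $p+1,q+1>2$, a small enough $\rho>0$ and then $\delta:=\frac18\rho^2>0$ do the job on $S_\rho$, uniformly for $\lambda\in(0,1]$.

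For part \ref{MPii} I would fix once and for all a radial $w\in C_0^\infty(\R^3)$ with $w\ge0$, $w\not\equiv0$, and estimate $I_\lambda(tw)$ from above as $t\to+\infty$. The delicate term is the coupling one, and the crucial observation is that it grows \emph{at most cubically} in $t$. Indeed, combining
$$
\int_{\R^3}\bigl(1-\sqrt{1-|\nabla\phi_u|^2}\bigr)\ \le\ \int_{\R^3}\phi_u u^2\ =\ \int_{\R^3}\frac{|\nabla\phi_u|^2}{\sqrt{1-|\nabla\phi_u|^2}}\ \le\ \|\phi_u\|_\infty\|u\|_2^2
$$
(Lemma \ref{Lem:phi}) with the elementary $\tfrac12\|\nabla\phi_u\|_2^2\le\int_{\R^3}(1-\sqrt{1-|\nabla\phi_u|^2})$ and the quantitative embedding $\|\phi_u\|_\infty\le C\|\nabla\phi_u\|_\infty^{1/3}\|\phi_u\|_6^{2/3}\le C\|\nabla\phi_u\|_2^{2/3}$ (using $\|\nabla\phi_u\|_\infty\le1$ and $\|\phi_u\|_6\le C\|\nabla\phi_u\|_2$), one first gets $\|\nabla\phi_u\|_2\le C\|u\|_2^{3/2}$ and then $\int_{\R^3}\phi_u u^2\le C\|u\|_2^{3}$. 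On the other hand \eqref{f3} gives $F(s)\ge C_1 s^\varrho-C_2$ for $s\ge0$, whence $\int_{\R^3}F(tw)\ge C_1 t^\varrho\int_{\R^3}w^\varrho-C$ for $t\ge1$, with $\varrho\in(3,4)$. Putting everything together, and using $\frac{\lambda}{3}\|tw\|_2^{3}\le\frac{t^3}{3}\|w\|_2^{3}$ and $-\frac{\lambda}{q+1}\int_{\R^3}|tw|^{q+1}\le0$,
$$
I_\lambda(tw)\ \le\ \frac{t^2}{2}\|w\|^2+A\,t^3-C_1\Bigl(\int_{\R^3}w^\varrho\Bigr)t^\varrho+C\qquad(t\ge1),
$$
with $A,C$ independent of $\lambda\in(0,1]$. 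Since $\varrho>3$, the right-hand side tends to $-\infty$; I would then take $t_*$ so large that $\|t_*w\|>\rho$ and the bound is negative, and set $v:=t_*w$, which works for all $\lambda\in(0,1]$ at once.

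The step I expect to be the real obstacle — and the point that makes the range $p\le5/2$ reachable — is exactly the cubic estimate on $\int_{\R^3}\phi_u u^2$: the naive bound $\int_{\R^3}\phi_u u^2\le C\|u\|_{12/5}^4$ of Lemma \ref{Lem:tech} yields only quartic growth along $tw$, which cannot be absorbed by $\int_{\R^3}F(tw)\sim t^\varrho$ since $\varrho<4$; it is the $L^\infty$-control on $\phi_u$ (the manifestation of the saturation of the mean curvature operator) that brings the growth down to $t^3$, which is precisely why \eqref{f3} imposes $\varrho>3$. Note that the perturbation term $-\frac{\lambda}{q+1}|u|^{q+1}$ plays no role in the geometry itself; its purpose, to be exploited later, is to restore the boundedness of Palais--Smale sequences of $I_\lambda$.
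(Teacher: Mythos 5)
Your proof is correct, and while part (\ref{MPi}) coincides with the paper's argument, your part (\ref{MPii}) takes a genuinely different route. The paper never needs a cubic bound on the coupling term: it keeps the quartic estimate $\int_{\R^3}\phi_u u^2\leq C\|u\|_{12/5}^4$ of Lemma \ref{Lem:tech}, but evaluates $I_\lambda$ along the scaled path $e_t(x)=t^2e(tx)$, under which $\|e_t\|_{12/5}^4$ and $\|e_t\|_2^3$ grow only like $t^3$ and $t^{3/2}$, while \eqref{f3} gives $\int_{\R^3}F(e_t)\geq t^{2\varrho-3}\int_{\R^3}F(e)$ (see \eqref{eq:F}); since $2\varrho-3>3$, negativity follows for $t$ large, uniformly in $\lambda$. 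You instead stay on the straight ray $tw$ and compensate by proving the sharper estimate $\int_{\R^3}\phi_u u^2\leq C\|u\|_2^3$, obtained from $E_u(\phi_u)\leq0$ (Lemma \ref{Lem:phi}), the pointwise inequality $1-\sqrt{1-s}\geq s/2$, and the interpolation $\|\phi_u\|_\infty\leq C\|\nabla\phi_u\|_\infty^{1/3}\|\phi_u\|_6^{2/3}\leq C\|\nabla\phi_u\|_2^{2/3}$, which genuinely uses the constraint $\|\nabla\phi_u\|_\infty\leq1$ (this is the quantitative form of the $L^\infty$ embedding in Lemma \ref{Lem:compact0}); I checked the chain and it does yield $\|\nabla\phi_u\|_2\leq C\|u\|_2^{3/2}$ and hence the cubic bound, and note you do not even need the identity of Lemma \ref{Lem:phi}-(\ref{Lem:phi-iii}) for this, only H\"older. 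Both arguments hinge on $\varrho>3$, but for different reasons: in the paper because the scaling tames the quartic term down to $t^3$, in yours because the coupling term is itself at most cubic in the amplitude. What each buys: the paper's proof uses only estimates already recorded in its preliminaries, at the price of the less transparent dilation path; yours uses the more natural ray $tw$ (with $w\geq0$ compactly supported, which correctly handles that \eqref{f3} is assumed only for $s>0$) at the price of establishing a coupling estimate not stated in the paper --- an estimate which is interesting in its own right, since it makes explicit that the saturation of the mean curvature operator reduces the nonlocal term to cubic growth.
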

\begin{proof}
\eqref{MPi}
For any $u\in H_r^1(\R^3)$, by the definition of $I_\lambda$, Lemma \ref{Lem:phi}-\eqref{Lem:phi-i}
and \eqref{eq:fF} involving $q$, one has
\begin{equation*}
\aligned
I_{\lambda}(u)
&\geq\frac{1}{2}\|u\|^2-\int_{\R^3}F(u)-\frac{\lambda}{q+1}\int_{\R^3}|u|^{q+1} \\
&\geq\frac{1}{2}\|u\|^2
-\epsilon\|u\|_2^2
-\frac{1+C_\epsilon}{q+1}\int_{\R^3}|u|^{q+1}\\
&\geq\frac{1-\epsilon}{2}\|u\|^2
-\frac{1+C_\epsilon}{q+1}C\|u\|^{q+1}.
\endaligned
\end{equation*}
Taking  $\epsilon={1}/{2}$, and  $\|u\|=\rho>0$ small
enough, it is easy to check that there exists $\delta>0$ such that
$I_{\lambda}(u)\geq \delta$ for every $u\in S_{\rho}$. \\

\eqref{MPii}
By  the definition of $I_\lambda$ and Lemma \ref{Lem:tech}  it follows
\begin{equation}\label{eqn:MPG}
\aligned
I_{\lambda}(u)
&\leq\frac{1}{2}\|u\|^2
+\frac{1}{2}\int_{\R^3}\phi_uu^2+\frac{\lambda}{3}\|u\|_2^{3}-\int_{\R^3}F(u)
-\frac{\lambda}{q+1}\int_{\R^3}|u|^{q+1}\\
&\leq\frac{1}{2}\|u\|^2
+C_{1}\|u\|_{{12}/{5}}^4+C_{2}\|u\|_2^{3}
-\int_{\R^3}F(u).
\endaligned
\end{equation}
For $e\in H_r^1(\R^3)\setminus\{0\}$, let  $e_{t}=t^2e(tx)$
and observe that
\begin{equation*}\label{eqn:MPG2}
\int_{\R^3}F(e_{t})=t^{-3}\int_{\R^3}F(t^2 e)=:t^{-3} \Phi(t).
\end{equation*}
By \eqref{f3}, a straightforward computation yields
$$
\frac{\Phi'(t)}{\Phi(t)}\geq \frac{2\varrho}{t} \quad \forall t>0
$$
and then, by integrating on $[1, t]$, with $t>1$, we have
$\Phi(t)\geq \Phi(1)t^{2\varrho}$, implying that
\begin{equation}\label{eq:F}
\int_{\mathbb R^{3}} F(e_{t}) \geq t^{2\varrho -3}\int_{\mathbb R^{3}}F(e).
\end{equation}
Consequently by \eqref{eqn:MPG}  and \eqref{eq:F} we infer
\begin{equation*}\label{eqn:MPG1}
I_{\lambda}(e_{t})\leq\frac{t^3}{2}\int_{\R^3}|\nabla e|^2+\frac{t}{2}\int_{\R^3}e^2
+C_{1}t^3\|e\|_{{12}/{5}}^4+C_{2} t^{3/2}
\|e\|_2^{3}
-t^{2\rho-3}\int_{\R^3}F(e).
\end{equation*}
Since by \eqref{f3} it is $\varrho>3$, the conclusion holds with $t$  large enough.
\end{proof}

By the well-known Mountain-Pass theorem (see \cite{AR,Willem96}),
there exists a $(PS)_{c_{\lambda}}$ sequence
$\{u_n\}\subset H_r^1(\R^3)$, that is,
\begin{equation*}
I_{\lambda}(u_n)\rightarrow c_{\lambda}\quad\text{and}\quad I'_{\lambda}(u_n)\rightarrow0.
\end{equation*}
It is clear that $\{u_{n}\}$ depends on $\lambda$ but we omit this dependence in the sequel.
Here $c_{\lambda}$ is the Mountain Pass level characterized by
\begin{equation*}
c_{\lambda}=\inf\limits_{\gamma\in\Gamma_{\lambda}}\max\limits_{t\in[0,1]}I_{\lambda}(\gamma(t))
\end{equation*}
with
$$
\Gamma_{\lambda}:=\left\{\gamma\in C^1([0,1],H_{r}^1(\R^3)):\,\gamma(0)=0\quad\text{and}\quad I_{\lambda}(\gamma(1)) <0\right\}.
$$
\begin{remark}\label{rem:bounded}
Observe from Lemma \ref{Lem:MP1} that there exist two constants $m_{1}, m_{2}>0 $
independently on $\lambda$ such that $m_{1}<c_\lambda<m_{2}$.
\end{remark}

We state the following lemma to ensure that Palais-Smale sequences of $I_{\lambda}$
 at level $c_\lambda$ have at least a convergence subsequence.

\begin{lemma}\label{Lem:compact}
For fixed 
$\lambda\in (0, 1]$, let $\{u_n\}\subset H_r^1(\R^3)$ be
a $(PS)$ sequence of $I_{\lambda}$.
Then there exists $u\in H_r^1(\R^3)$ such that
$u_n\rightarrow u$ in $H_r^1(\R^3)$.
\end{lemma}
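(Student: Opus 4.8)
The plan is to show that the $(PS)$ sequence $\{u_n\}$ for $I_\lambda$ is bounded in $H^1_r(\R^3)$, extract a weakly convergent subsequence $u_n \rightharpoonup u$, and then upgrade the convergence to strong convergence using the compact embedding $H^1_r(\R^3) \hookrightarrow L^s(\R^3)$ for $s \in (2,6)$ together with the weak continuity properties of the reduced functional. The presence of the perturbing terms $\frac{\lambda}{3}\|u\|_2^3$ and $\frac{\lambda}{q+1}\int |u|^{q+1}$ is precisely what makes boundedness easy, since $q > 4$: the combination $I_\lambda(u_n) - \frac{1}{\varrho} I_\lambda'(u_n)[u_n]$ (or a similar linear combination chosen so the highest-order term survives with a good sign) controls $\|u_n\|^2$ after absorbing the lower-order contributions. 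One must be slightly careful because $I_\lambda$ is defined on the non-vector set via $\phi_{u_n}$, but by Lemma~\ref{Lem:phi}\eqref{Lem:phi-iii} the term $\frac12\int \phi_{u_n}u_n^2 - \frac12\int(1-\sqrt{1-|\nabla\phi_{u_n}|^2})$ is nonnegative (indeed $\geq \frac14 \int \phi_{u_n}u_n^2$ by \eqref{eq:1/2}), so in the estimate giving a lower bound for $I_\lambda$ these curvature terms only help; meanwhile $I_\lambda'(u_n)[u_n]$ contains $\int \phi_{u_n}u_n^2$ with a controllable sign, and Lemma~\ref{Lem:tech} bounds it by $C\|u_n\|_{12/5}^4$, which is dominated once we know boundedness is being sought via the $\|u_n\|^2$ and $\|u_n\|_{q+1}^{q+1}$ terms. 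So boundedness follows by a standard manipulation using \eqref{f3} and $q \in (\max\{p,4\},5)$.

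Once $\{u_n\}$ is bounded, pass to a subsequence with $u_n \rightharpoonup u$ in $H^1_r(\R^3)$ and, by Strauss' compactness lemma, $u_n \to u$ strongly in $L^s(\R^3)$ for every $s \in (2,6)$, and a.e. By Lemma~\ref{Lem:compact0}\eqref{Lem:phi-i}--type properties, more precisely Lemma~\ref{Lem:compact0}(vi), $\phi_{u_n} \to \phi_u$ in $L^\infty(\R^3)$, hence $\int \phi_{u_n} u_n^2 \to \int \phi_u u^2$. Using \eqref{eq:fF}, $\int f(u_n)(u_n - u) \to 0$ and $\int |u_n|^{q-1}u_n(u_n-u)\to 0$ since $p, q < 6$ and the $L^s$ convergence holds for the relevant exponents; similarly $\int \phi_{u_n}u_n(u_n-u) \to 0$ because $\phi_{u_n}$ is bounded in $L^\infty$ and $u_n \to u$ in $L^2$. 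Finally, also $\|u_n\|_2 \to \|u\|_2$, so the linear perturbation term $\lambda\|u_n\|_2 \int u_n(u_n-u) \to 0$.

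The conclusion then comes from testing $I_\lambda'(u_n) \to 0$ against $u_n - u$. Writing out
$$
I_\lambda'(u_n)[u_n-u] = \int (\nabla u_n \nabla(u_n-u) + u_n(u_n-u)) + \int \phi_{u_n}u_n(u_n-u) + \lambda\|u_n\|_2\int u_n(u_n-u) - \int f(u_n)(u_n-u) - \lambda\int|u_n|^{q-1}u_n(u_n-u),
$$
the left-hand side is $o(1)$ because $I_\lambda'(u_n)\to 0$ in $(H^1_r)^*$ and $\{u_n-u\}$ is bounded, and every term on the right except the first two is $o(1)$ by the previous paragraph. Since $u_n \rightharpoonup u$ also gives $\int(\nabla u \nabla(u_n-u) + u(u_n-u)) \to 0$, we conclude $\|u_n-u\|^2 = \int |\nabla(u_n-u)|^2 + (u_n-u)^2 \to 0$, i.e., $u_n \to u$ in $H^1_r(\R^3)$. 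The only mildly delicate point — and the one I would write out carefully — is justifying $\int \phi_{u_n}u_n(u_n-u)\to 0$ and the convergence of the curvature/coupling terms, which is where the $X$-space machinery from Lemma~\ref{Lem:compact0} and Lemma~\ref{Lem:tech} is essential; everything else is routine for subcritical problems.
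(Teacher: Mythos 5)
Your second part (extracting a weakly convergent subsequence and testing $I_\lambda'(u_n)[u_n-u]=o(1)$, handling the nonlocal term via Lemma \ref{Lem:compact0}/Lemma \ref{Lem:tech}) is essentially the paper's argument, but your first part contains a genuine gap. The multiplier you propose, $I_\lambda(u_n)-\tfrac{1}{\varrho}I_\lambda'(u_n)[u_n]$ with $\varrho\in(3,4)$ from \eqref{f3}, leaves the Born--Infeld coupling with a \emph{bad} sign: using \eqref{eq:1/2}, the net contribution of the nonlocal terms is bounded below only by $\bigl(\tfrac14-\tfrac1\varrho\bigr)\int_{\R^3}\phi_{u_n}u_n^2$, which is negative since $\varrho<4$. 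Your plan to then dominate this term through Lemma \ref{Lem:tech} by $C\|u_n\|_{12/5}^4$ using the $\|u_n\|^2$, $\|u_n\|_2^3$ and $\|u_n\|_{q+1}^{q+1}$ terms cannot work, because the required functional inequality is false: for $u_\sigma(x)=\sigma\varphi(x/\sigma)$ one has $\|u_\sigma\|_{12/5}^4\sim\sigma^{9}$ while $\|u_\sigma\|^2\sim\sigma^{5}$, $\|u_\sigma\|_2^3\sim\sigma^{15/2}$ and $\|u_\sigma\|_{q+1}^{q+1}\sim\sigma^{q+4}$ with $q+4<9$ (this is exactly the obstruction that makes the unperturbed problem with $\varrho<4$ delicate). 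The paper avoids the issue by taking the multiplier $\tfrac1\theta$ with $\theta\in(4,q+1)$: then, after \eqref{eq:1/2}, the coefficient of $\int\phi_{u_n}u_n^2$ is $\tfrac{\theta-4}{4\theta}\ge 0$ and the nonlocal term is simply discarded, while the remaining negative contribution $-C\int|u_n|^{p+1}$ (coming from \eqref{f3}--\eqref{f2}) is absorbed pointwise between $\lambda|u_n|^{q+1}$ (possible since $\theta<q+1$) and $\lambda B_1|u_n|^{2}$, the arbitrarily large $L^2$-coefficient $B_1$ being provided by the cubic perturbation via $\|u_n\|_2^3\ge B_1\|u_n\|_2^2-B_2$. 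This second ingredient is also missing from your sketch, and it is the very reason the $\lambda\|u\|_2 u$ perturbation is introduced.

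A secondary, fixable inaccuracy: in the convergence step you invoke $u_n\to u$ in $L^2(\R^3)$ and $\|u_n\|_2\to\|u\|_2$, but the compact radial embedding holds only for $s\in(2,6)$, not at the endpoint $s=2$. For the term $\int\phi_{u_n}u_n(u_n-u)$ this is harmless (estimate it by $\|\phi_{u_n}\|_6\|u_n\|_2\|u_n-u\|_3$, as in the paper), but for the perturbation term $\lambda\|u_n\|_2\int u_n(u_n-u)$ you should instead observe that, by weak $L^2$ convergence, $\int u_n(u_n-u)=\|u_n\|_2^2-\int u_nu$ has nonnegative inferior limit, so in the identity $I'_\lambda(u_n)[u_n-u]=o_n(1)$ both this term and $\|u_n\|^2-\int(\nabla u_n\nabla u+u_nu)$ are asymptotically nonnegative and hence both vanish, which yields $\|u_n\|\to\|u\|$ and strong convergence.
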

\begin{proof}
The proof is divided into two parts.

In the first one we show that the sequence $\{u_n\}$ is bounded in $H_r^1(\R^3)$.
For $\theta\in(4,q+1)$, by Lemma \ref{Lem:tech}, there exist
$C_1,C_2>0$ such that
\begin{eqnarray*}\label{eqn:PS1}
C_1+C_2\|u_n\|&\geq& I_{\lambda}(u_n)-\frac{1}{\theta}I'_\lambda(u_n)u_n\\
&= &\frac{\theta-2}{2\theta}\|u_n\|^2+\frac{\theta-2}{2\theta}\int_{\R^3}\phi_{u_n}u_n^2
+\frac{\theta-3}{3\theta}\lambda\|u_n\|_2^{3}\\
&&+\int_{\R^3}(\frac{1}{\theta}f(u_n)u_n-F(u_n))-\frac{1}{2}\int_{\R^3}(1-\sqrt{1-|\nabla \phi_{u_n}|^2})
+\frac{q+1-\theta}{\theta(q+1)}\lambda\int_{\R^3}|u_n|^{q+1}\\
&\geq &\frac{\theta-2}{2\theta}C_3\|u_n\|^2+\frac{\theta-2}{2\theta}\int_{\R^3}\phi_{u_n}u_n^2
+\frac{\theta-3}{3\theta}\lambda\|u_n\|_2^{3}\\
&&-C_4\int_{\R^3}|u_n|^{p+1}-\frac{1}{2}\int_{\R^3}(1-\sqrt{1-|\nabla \phi_{u_n}|^2})
+\frac{q+1-\theta}{\theta(q+1)}\lambda\int_{\R^3}|u_n|^{q+1}.
\end{eqnarray*}
By recalling \eqref{eq:1/2} it follows that
\begin{eqnarray}\label{eqn:PS3}
\aligned
C_1+C_2\|u_n\|
\geq & \ \frac{\theta-2}{2\theta}C_3\|u_n\|^2+\frac{\theta-4}{4\theta}\int_{\R^3}\phi_{u_n}u_n^2
+\frac{\theta-3}{3\theta}\lambda\|u_n\|_2^{3}\\
&-C_4\int_{\R^3}|u_n|^{p+1}
+\frac{q+1-\theta}{\theta(q+1)}\lambda\int_{\R^3}|u_n|^{q+1}\\
\geq &\ \frac{\theta-2}{2\theta}C_3\|u_n\|^2
+\frac{\theta-3}{3\theta} \lambda \|u_n\|_2^{3}\\
&-C_4\int_{\R^3}|u_n|^{p+1}
+\frac{q+1-\theta}{\theta(q+1)}\lambda\int_{\R^3}|u_n|^{q+1}.
\endaligned
\end{eqnarray}
Observe that for any large $B_1>0$, there exists $B_2>0$ such that
$$
\frac{\theta-3}{3\theta}\|u_n\|_2^{3}\geq B_1\|u_n\|_2^2-B_2,
$$
which, together with (\ref{eqn:PS3}), implies that
\begin{equation}\label{eqn:PS4}
\aligned
C_1+\lambda B_2+C_2\|u_n\|
\geq &\frac{\theta-2}{2\theta}C_3\|u_n\|^2+\int_{\R^3}\bigg(\lambda B_1|u_n|^2
-C_4|u_n|^{p+1}
+\frac{q+1-\theta}{\theta(q+1)}\lambda|u_n|^{q+1}\bigg).
\endaligned
\end{equation}
We note that $\lambda B_1|t|^2
-C_4|t|^{p+1}
+\frac{q+1-\theta}{\theta(q+1)}\lambda|t|^{q+1}\geq0$ for $t\geq0$, since $B_1$
can be chosen arbitrary large.
Thus, it follows from (\ref{eqn:PS4}) that
$\|u_n\|\leq C$ for some $C$ independently of $n$. 

In the second part of the proof we show the strong convergence of $\{u_{n}\}$.
Up to a subsequence,
we suppose that there exist $u\in H_r^1(\R^3)$  such
that
\begin{equation*}
\begin{aligned}
&u_n\rightharpoonup u \quad{\rm weakly \,in }\, H_r^1(\R^3),\\
&u_n\rightarrow u\quad{\rm in}\, L^p(\R^3),\, 2<p<6,\\
&u_n\rightarrow u\quad {\rm a.e.\,in}\, \R^3.
\end{aligned}
\end{equation*}
Hence, passing to the limit in $I_{\lambda}'(u_{n})[u_{n}-u] = o_{n}(1)$
and using that (recall Lemma \ref{Lem:tech})
$$\int_{\mathbb R^{3}} |\phi_{u_{n}}u_{n} ||u_{n} - u|\leq\|\phi_{u_{n}}\|_{6} \|u_{n}\|_{2} \|u_{n}-u\|_{3}
\leq C'\| u_{n}\|^{2}\|u_{n}\|_{2}\|u_{n}-u\|_{3} \to 0$$
and that, by \eqref{f2} (recall that $q<5$)
\begin{eqnarray*}
&&\int_{\mathbb R^{3}} |u_{n}|^{q}|u_{n}-u| \leq \|u_{n}\|_{q+1}^{q} \|u_{n} - u\|_{q+1} \to 0, \\
&&  \|u_{n}\|_{2}\int_{\mathbb R^{3}} |u_{n}||u_{n}-u|\to 0, \\
&& \int_{\mathbb R^{3}} |f(u_{n})||u_{n}-u|\to 0,
\end{eqnarray*}
we conclude that
$$\|u_{n}\|^{2} - \|u\|^{2} =o_{n}(1)$$
and then
$u_n\rightarrow u$ in $H^1_r(\R^3)$.
\end{proof}

It follows from Lemma \ref{Lem:compact}
that for each $\lambda\in(0,1]$, there exists $u_\lambda\in H_r^1(\R^3)$ such that
$$I_\lambda(u_\lambda)=c_\lambda \quad \text{and}\quad I'_\lambda(u_\lambda)=0.$$
 By $I'_\lambda(u_\lambda)[u_\lambda]=0$, we have
\begin{equation}\label{eqn:Pr1}
\int_{\R^3}(|\nabla u_\lambda|^2+u_\lambda^2)
+\int_{\R^3}\phi_{u_\lambda}u_\lambda^2+\lambda\|u_\lambda\|_2^{3}-\int_{\R^3}f(u_\lambda)u_\lambda-
\lambda\int_{\R^3}|u_\lambda|^{q+1}=0.
\end{equation}
Recalling hypothesis \eqref{f3}, it follows from (\ref{eqn:Pr1}) that for $b>0$,
\begin{equation}\label{eqn:Pr2}
b\int_{\R^3}F(u_\lambda)\leq \frac{b}{\varrho}\int_{\R^3}(|\nabla u_\lambda|^2+u_\lambda^2)
+\frac{b}{\varrho}\int_{\R^3}\phi_{u_\lambda}u_\lambda^2+\frac{b\lambda}{\varrho}\|u_\lambda\|_2^{3}
-\frac{b\lambda}{\varrho}\int_{\R^3}|u_\lambda|^{q+1}.
\end{equation}
Moreover, similarly to Lemma \ref{Lem:poh}, we obtain the associated Pohozaev type identity for the modified
problem \eqref{eqn:2-spos}:
\begin{multline}\label{eqn:Pr3}
\frac{1}{2}\int_{\R^3}|\nabla u_\lambda|^2+\frac{3}{2}\int_{\R^3}u_\lambda^2
+2\int_{\R^3}\phi_{u_\lambda}u_\lambda^2 +\frac{3\lambda}{2}\|u_\lambda\|_2^{3} \\
-\frac{3}{2}\int_{\R^3}(1-\sqrt{1-|\nabla \phi_{u_\lambda}|^2})=3\int_{\R^3}F(u_\lambda)+\frac{3\lambda}{q+1}\int_{\R^3}|u_\lambda|^{q+1}.
\end{multline}
Combining (\ref{eqn:Pr2}) with (\ref{eqn:Pr3}), we have for $a\in\R$
\begin{equation}\label{eqn:Pr4}
\aligned
(a+b)\int_{\R^3}F(u_\lambda)\leq& \  \left(\frac{a}{6}+\frac{b}{\varrho}\right)\int_{\R^3}|\nabla u_\lambda|^2
+\left(\frac{a}{2}+\frac{b}{\varrho}\right)\int_{\R^3}|u_\lambda|^2\\
&+\left(\frac{2a}{3}+\frac{b}{\varrho}\right)\int_{\R^3}\phi_{u_\lambda}u_\lambda^2
-\frac{a}{2}\int_{\R^3}\left(1-\sqrt{1-|\nabla u|^2}\right)\\
&+\left(\frac{a}{2}+\frac{b}{\varrho}\right)\lambda\|u_\lambda\|_2^{3}-\left(\frac{a}{q+1}
+\frac{b}{\varrho}\right)\lambda\|u_\lambda\|_{q+1}^{q+1}.
\endaligned
\end{equation}
By letting $a=1-b$ in (\ref{eqn:Pr4}),  using the definition of
$I_\lambda$ and that (recall \eqref{eq:1/2})
$$
\frac{b}{2}\int_{\R^3}(1-\sqrt{1-|\nabla u|^2})\leq \frac{b}{4}\int_{\R^3}\phi_uu^2,
$$
 it follows that
\begin{equation}\label{eqn:Pr5}
\aligned
c_\lambda=I_\lambda(u_\lambda)\geq& \  \left(\frac{1}{3}+\frac{b(\varrho-6)}{6\varrho}\right)\int_{\R^3}|\nabla u_\lambda|^2
+\left(\frac{b}{2}-\frac{b}{\varrho}\right)\int_{\R^3}|u_\lambda|^2\\
&+\left(\frac{1}{2}-\frac{2(1-b)}{3}-\frac{b}{\varrho}\right)\int_{\R^3}\phi_{u_\lambda}u_\lambda^2
-\frac{b}{2}\int_{\R^3}(1-\sqrt{1-|\nabla u_\lambda|^2}) \\
&+\left(\frac{1}{3}-\frac{1-b}{2}-\frac{b}{\varrho}\right)\lambda\|u_\lambda\|_2^{3}
+\left(\frac{1-b}{q+1}+\frac{b}{\varrho}-\frac{1}{q+1}\right)\lambda\|u_\lambda\|_{q+1}^{q+1}\\
\geq& \ \left(\frac{1}{3}+\frac{b(\varrho-6)}{6\varrho}\right)\int_{\R^3}|\nabla u_\lambda|^2
+\left(\frac{b}{2}-\frac{b}{\varrho}\right)\int_{\R^3}|u_\lambda|^2 \\
&+\left(\frac{1}{2}-\frac{2(1-b)}{3}-\frac{b}{\varrho}-\frac{b}{4}\right)\int_{\mathbb R^{3}} \phi_{u_{\lambda}} u_{\lambda}^{2}\\
&+\left(\frac{1}{3}-\frac{1-b}{2}-\frac{b}{\varrho}\right)\lambda\|u_\lambda\|_2^{3}
+\left(\frac{1-b}{q+1}+\frac{b}{\varrho}-\frac{1}{q+1}\right)\lambda\|u_\lambda\|_{q+1}^{q+1}.
\endaligned
\end{equation}
It is easy to check that with the choice $b=2$ all the coefficients above are positive.

\begin{remark}\label{rem:I}
Note that the above computation, from \eqref{eqn:Pr1} to \eqref{eqn:Pr5}, can be repeated for the original functional $I$, that is without the terms having $\lambda$ in front of the integrals. More specifically,
\begin{enumerate}[label=(\alph{*}), ref=\alph{*}]
\item\label{a} if  there exists $u\in H^{1}_{r}(\mathbb R^{3})$ such that $I'(u)=0$, then
 $I(u)\geq C\|u\|^{2}$, with $C$ independent on $u$;
\item\label{b} if $\{u_{n}\}$ is such that $I'(u_{n})=0$ and $\{I(u_{n})\}$ is convergent, then $\{u_{n}\}$
is bounded.
\end{enumerate}

\end{remark}


Then by \eqref{eqn:Pr5}, $c_{\lambda}=I_{\lambda}(u_{\lambda})\geq C \|u_{\lambda}\|^{2}$
 and by Remark \ref{rem:bounded} it follows that
$\{u_\lambda\}_{\lambda\in(0,1]}$ is bounded in $H_r^1(\R^3)$
 uniformly in $\lambda\in(0,1]$; 
 then we can assume,
 as $\lambda\to 0^{+}$,
 $$u_{\lambda}\ \rightharpoonup u_{0} \ \text{ in } \  H^{1}_{r}(\mathbb R^{3})
 \qquad\text{and}\qquad
 I_{\lambda}(u_{\lambda})=c_{\lambda} \to c_{0}>0.$$

 Moreover, for any $v\in H^{1}_{r}(\mathbb R^{3})$, we have
 $$
 I'(u_\lambda)[v]= I'_\lambda(u_\lambda)[v]-\lambda\|u_\lambda\|_2 \int_{\mathbb R^{3}}u_\lambda v
+ \lambda\int_{\R^3} |u_\lambda|^{q}u_\lambda v=o_{\lambda}(1)\|v\|
 $$
 and
 $$I(u_{\lambda}) = I_{\lambda}(u_{\lambda})-\frac{\lambda}{3}\|u_{\lambda}\|_2^{3}
+\frac{\lambda}{q+1}\int_{\R^3}|u_{\lambda}|^{q+1}=c_{0}+o_{\lambda}(1).$$
Thus, $\{u_\lambda\}_{\lambda\in(0,1]}$ is a bounded Palais-Smale sequence for
the  unperturbed functional $I$ at level $c_0$. Arguing similarly as in the second part of the
proof of Lemma \ref{Lem:compact}, by $I'(u_{\lambda})[u_{\lambda} - u_{0}]=o_{\lambda}(1)$,
we get that
$$u_{\lambda} \to u_{0} \ \ \text{on }\ H^{1}_{r}(\mathbb R^{3}), \quad
I'(u_0)=0 \quad\text{and} \quad I(u_0)=c_{0}>0.$$
In particular $u_{0}$ is a nontrivial solution of \eqref{SBI} with $\mu=0$.
Actually we have proved the following fact that will be useful in the future.
\begin{proposition}\label{prop:convergence}
If $\{u_{\lambda}\}_{\lambda\in(0,1]}$ is such that $I'_{\lambda}(u_{\lambda})=0$ and
$c_{\lambda}=I_{\lambda}(u_{\lambda}) \in [m_{1},m_{2}]$, then
there exists $u_{0}\in H^{1}_{r}(\mathbb R^{3})\setminus\{0\}$ such that on a sequence
$\{\lambda_{n}\}$ tending to zero, it holds
$$u_{\lambda_{n}} \to u_{0} \quad \text{in } H^{1}_{r}(\mathbb R^{3}),
\quad c_{\lambda_{n}}\to c_{0}, \quad I(u_{0}) = c_{0} \quad \text{ and } \quad I'(u_{0}) = 0.$$
\end{proposition}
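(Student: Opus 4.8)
The plan is to package the chain of identities already derived for the perturbed functional $I_\lambda$ into an a priori bound that is \emph{uniform in $\lambda$}, and then to let $\lambda\to 0^+$ using the compactness argument of Lemma \ref{Lem:compact}.

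First, I would record the consequences of $I'_\lambda(u_\lambda)=0$. Testing with $u_\lambda$ gives the Nehari-type identity \eqref{eqn:Pr1}, which together with \eqref{f3} yields \eqref{eqn:Pr2}; independently, the Pohozaev identity for \eqref{eqn:2-spos} gives \eqref{eqn:Pr3}. Combining these two as in \eqref{eqn:Pr4}, setting $a=1-b$, and inserting the refined inequality \eqref{eq:1/2} produces \eqref{eqn:Pr5}. The decisive point is that with $b=2$ every coefficient on the right-hand side of \eqref{eqn:Pr5} is strictly positive \emph{and independent of $\lambda$}; hence $c_\lambda=I_\lambda(u_\lambda)\geq C\|u_\lambda\|^2$ with $C$ not depending on $\lambda$. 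Since $c_\lambda\leq m_2$ by hypothesis, the family $\{u_\lambda\}_{\lambda\in(0,1]}$ is bounded in $H^1_r(\mathbb R^3)$, uniformly in $\lambda$.

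Next, along a sequence $\lambda_n\to 0^+$ I would extract $u_{\lambda_n}\rightharpoonup u_0$ weakly in $H^1_r(\mathbb R^3)$ and, passing to a further subsequence, $c_{\lambda_n}\to c_0$; note $c_0\in[m_1,m_2]$, so in particular $c_0>0$. The perturbation terms are negligible: for every $v\in H^1_r(\mathbb R^3)$,
$$I'(u_{\lambda_n})[v]=I'_{\lambda_n}(u_{\lambda_n})[v]-\lambda_n\|u_{\lambda_n}\|_2\int_{\mathbb R^3}u_{\lambda_n}v+\lambda_n\int_{\mathbb R^3}|u_{\lambda_n}|^{q}u_{\lambda_n}v=o_n(1)\|v\|$$
by the uniform bound on $\|u_{\lambda_n}\|$ and $\lambda_n\to 0$, and likewise $I(u_{\lambda_n})=c_{\lambda_n}-\tfrac{\lambda_n}{3}\|u_{\lambda_n}\|_2^{3}+\tfrac{\lambda_n}{q+1}\int_{\mathbb R^3}|u_{\lambda_n}|^{q+1}=c_0+o_n(1)$. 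Thus $\{u_{\lambda_n}\}$ is a bounded Palais--Smale sequence for the unperturbed functional $I$ at level $c_0$.

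Finally, I would rerun the second part of the proof of Lemma \ref{Lem:compact}: from $I'(u_{\lambda_n})[u_{\lambda_n}-u_0]=o_n(1)$, the term $\int_{\mathbb R^3}\phi_{u_{\lambda_n}}u_{\lambda_n}(u_{\lambda_n}-u_0)$ vanishes by Lemma \ref{Lem:tech} and the strong $L^p$ convergence, and the terms involving $f$ and $|u|^{q-1}u$ vanish by \eqref{f2} together with $q<5$; this forces $\|u_{\lambda_n}\|^2-\|u_0\|^2=o_n(1)$, hence $u_{\lambda_n}\to u_0$ strongly in $H^1_r(\mathbb R^3)$, $I'(u_0)=0$ and $I(u_0)=c_0>0$, so $u_0\neq 0$. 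The only genuinely delicate step is verifying that the coefficients in \eqref{eqn:Pr5} can be made positive \emph{independently of $\lambda$} — this is exactly what allows the a priori bound to survive the limit — whereas the passage to the limit itself is routine once Lemma \ref{Lem:compact} is in hand.
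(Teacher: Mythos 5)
Your proposal is correct and follows essentially the same route as the paper: the uniform-in-$\lambda$ bound $c_\lambda\geq C\|u_\lambda\|^2$ via the Nehari/Pohozaev combination \eqref{eqn:Pr1}--\eqref{eqn:Pr5} with $b=2$, then treating $\{u_{\lambda_n}\}$ as a bounded Palais--Smale sequence for the unperturbed functional $I$ and concluding strong convergence by the second part of the proof of Lemma \ref{Lem:compact}. No gaps; the only cosmetic slip is mentioning a $|u|^{q-1}u$ term when passing to the limit in $I'(u_{\lambda_n})[u_{\lambda_n}-u_0]$, which is absent from $I'$ itself and in any case harmless.
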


We do not know if $u_{0}$ found above is a ground state solution, however we are now able to give the

\subsection{Proof of Theorem \ref{Thm:EXST}}

Define the set of solutions
$$
\mathcal{S}:=\{u\in H_r^1(\R^3)\setminus\{0\}:\,I'(u)=0\}
$$
that, for what we have proved, is nonempty.
For  $u\in \mathcal{S}$ by \eqref{eq:fF},
 for any $\varepsilon>0$ there exists $C_\varepsilon>0$ such that
$$
\aligned
\|u\|^2\leq \|u\|^2+\int_{\R^3}\frac{|\nabla \phi_u|^2}{\sqrt{1-|\nabla\phi_u|^2}}\leq
\varepsilon\int_{\R^3}u^2+C_\varepsilon\int_{\R^3}|u|^{p+1},
\endaligned
$$
which implies  that $\mathcal S$ is bounded away from zero.


By \eqref{a} in Remark \ref{rem:I},
 there exists some $C>0$ satisfying $I(u)\geq C\|u\|^2$ for all $u\in \mathcal{S}$.
We infer that
$$
c_*:=\inf\limits_{u\in\mathcal{S}}I(u)>0.
$$
Take finally a minimising sequence
$\{u_n\}\subset \mathcal{S}$ so that $I(u_n)\rightarrow c_*$.
By \eqref{b} in Remark \ref{rem:I} we know that
 $\{u_n\}$ is bounded. Similar to the second part of the proof of Lemma \ref{Lem:compact},
there exists $u_{*}\in H_r^1(\R^3)$ so that $u_n\rightarrow u_{*}$ in $H_r^1(\R^3)$ and $I'(u_{*})=0$.
Then $(u_{*}, \phi_{u_{*}})$ is a radial ground state solution of (\ref{SBI}).

\begin{remark}
We observe that if we had perturbed the problem on the left hand
side  with $\lambda \|u\|_{2}^{2\alpha}u$ with $\alpha\in(0,1)$,  everything
would have equally worked.
\end{remark}

\subsection{Proof of Theorem \ref{Thm:multi}}

In this subsection, by using the perturbation approach together with the Symmetric Mountain-Pass
theorem we  prove that equation \eqref{SBI} has infinitely many high energy solutions.

Let $B_{R}$ be the ball of radius $R>0$ of $H^{1}_{r}(\mathbb R^{3})$.
Choose a sequence of finite dimensional subspaces $E_j$ of $H^{1}_r(\R^3)$ with $\dim E_j=j$,
and $R_j>0$ such that $I_\lambda(u)<0$ for $u\in E_j\cap\partial B_{R_j}$.
The existence of such $R_{j}$ is justified by the fact that
in the proof of \eqref{MPii} of Lemma \ref{Lem:MP1} the element $e$ is arbitrary.
Moreover we actually have that $R_{j}$ does not depends on $\lambda$, namely
$$\forall \lambda\in(0,1] : I_{\lambda} (u)<0 \quad\text{ for any } u \in E_j\cap\partial B_{R_j}.$$

Then by  \eqref{MPi} of Lemma \ref{Lem:MP1} the functional $I_{\lambda}$
satisfies all the assumptions of the Symmetric Mountain Pass Theorem
and setting
$$
\Gamma_j=\bigg\{B=\phi(E_j\cap B_{R_j}) | \phi\in C(E_j\cap B_{R_j},H^{1}_r(\R^3)),
\,\phi \,\,\text{is odd},\ \phi=\textrm{Id} \ \text{on} \ E_j\cap \partial B_{R_j}\bigg\},
$$
the minimax values
$$
 c_\lambda(j)=\inf\limits_{B\in\Gamma_j}\sup\limits_{u\in B} I_\lambda(u)\geq\delta>0
 $$
are critical values and  $c_\lambda(j)=I_{\lambda}(u_{\lambda}(j))\rightarrow+\infty$ as
 $j\rightarrow+\infty$, see e.g. \cite{Tanaka}.

For any fixed $j$, by the definition of $ c_\lambda(j)$, we have, recalling Lemma \ref{Lem:tech},
$$
\aligned
 c_\lambda(j)&\leq\sup\limits_{u\in E_j\cap B_{R_j}} I_\lambda(u)\\
 &\leq \sup\limits_{u\in E_j\cap B_{R_j}} \bigg
 \{ C_{1} \|u\|^2+ C_{2}\| u\|^{4} + C_{3}\|u\|^{3}\bigg\}:=\kappa_{R_j},
\endaligned
$$
with $\kappa_{R_j}$ of course  independent of $\lambda\in(0,1]$ and $\|\cdot\|$ is any norm
in $E_{j}$. It follows from
Proposition  \ref{prop:convergence} that there exists $u_{0}(j)\in H^{1}_{r}(\mathbb R^{3})\setminus\{0\}$
such that on a sequence $\lambda_{n}\to 0^{+}$,
$$u_{\lambda_{n}}(j) \to u_{0}(j) \quad \text{in } H^{1}_{r}(\mathbb R^{3}),
\quad c_{\lambda_{n}}(j)\to c_{0}(j)\geq\delta, \quad I(u_{0}(j)) = c_{0}(j) \quad \text{ and } \quad I'(u_{0}(j)) = 0,$$
namely $u_{0}(j)$ is a nontrivial solution of \eqref{SBI}.

If we show that $c_0(j)\rightarrow+\infty$ as $j\to+\infty$,
then  problem \eqref{SBI} has infinitely many solutions and the proof
of Theorem \ref{Thm:multi} is concluded.

Recalling Lemma \ref{Lem:phi}, we estimate $I_\lambda$ as follows
$$
\aligned
I_\lambda(u)=&\frac{1}{2}\int_{\R^3}(|\nabla u|^2+u^2)+\frac{1}{2}\int_{\R^3}\phi_uu^2
-\int_{\R^3}F(u)-\frac{1}{2}\int_{\R^3}(1-\sqrt{1-|\nabla \phi_u|^2})\\
&+\frac{\lambda}{3}\|u\|_2^{3}-\frac{\lambda}{q+1}\int_{\R^3}|u|^{q+1}\\
\geq&\frac{1}{2}\int_{\R^3}(|\nabla u|^2+u^2)-\int_{\R^3}F(u)-\frac{1}{q+1}\int_{\R^3}|u|^{q+1}:=J(u).
\endaligned
$$
Define the  set $\Theta\subset H_r^1(\R^3)$ by
$$
\Theta:=\bigg\{u\in H^{1}_{r}(\mathbb R^{3}): \int_{\R^3}(|\nabla u|^2+u^2)> \int_{\R^3}f(u)u+\int_{\R^3}|u|^{q+1}\bigg\}
\cup\{0\}.
$$
Note that $\partial\Theta$ is the Nehari manifold associated to the even functional $J$,
which, by classical arguments, is bounded away from zero and homeomorphic to the unit sphere.
Then, if $B\in \Gamma_j$ an easy modification of the proof of \cite[Proposition 9.23]{Rabinowitz86}
shows that an intersection property holds so that $\gamma(B\cap\partial\Theta)\geq j$, for all $j\in \mathbb N$.
%
Here $\gamma(\cdot)$
is the Krasnoselski genus of a symmetric set. Hence,
$$
 c_{\lambda}(j)=\inf\limits_{B\in\Gamma_j}\sup\limits_{u\in B}  I_\lambda(u)
 \geq\inf\limits_{A\subset\partial\Theta,\gamma (A)\geq j}\sup\limits_{u\in A} J(u)
:=b(j).
$$
It is not hard to verify that the functional $J$ is bounded below on $\partial\Theta$
and satisfies the Palais-Smale condition. Then  the Ljusternick-Schnirelmann theory
guarantees that  $b(j)$ are diverging critical values for $J$.
 Therefore,
$$
 c_0(j)=\lim\limits_{\lambda\rightarrow0^+}c_{\lambda}(j)\geq b(j)\rightarrow+\infty, \ \ \text{as } j\to+\infty.
$$
That is to say, equation \eqref{SBI} has infinitely many high energy solutions. The proof is complete.

\s{The Critical Case}\label{sec:critical}
\subsection{The modified equation}
In this section we study \eqref{SBI} in the critical case, that is when
$\mu>0$, and without loss of generalities we assume $\mu=1$.

Our aim is to  establish the existence of a ground state solution to \eqref{SBI} with
a general critical nonlinear term.

Motivated by \cite{Liu19}, we  introduce a perturbation technique to overcome this difficulty by modifying system \eqref{SBI}.
The modified problem now is
\begin{equation}\label{eqn:3-spos}
\left\{
 \begin{array}{ll}
-\triangle u+u+\phi u+\lambda\|u\|_2 u=f(u)+|u|^{4}u\,\,&\mbox{in}\,\,\R^3,\medskip\\
-\textrm{div}\displaystyle\bigg(\frac{\nabla\phi}{\sqrt{1-|\nabla\phi|^2}}\bigg)=u^2&\mbox{in}\,\,\R^3,\medskip\\
u(x)\rightarrow0,\,\,\phi(x)\rightarrow0,&\,\text{as}\,\,x\rightarrow\infty,
\end{array}
\right.
\end{equation}
where $\lambda\in(0,1]$. Obviously, its associated energy functional is
$$
J_{\lambda}(u):=I(u)+\frac{\lambda}{3}\|u\|_2^{3}
$$
with $I$ defined in \eqref{eq:I} with $\mu=1$.

Arguing as in Lemma \ref{Lem:MP1}, we see  that
$J_\lambda$  satisfies the Mountain-Pass geometry.
\begin{lemma}\label{Lem:MP2}
Suppose that \eqref{f1}-\eqref{f3}  hold.
 Then 
 \begin{enumerate}[label=(\roman{*}), ref=\roman{*}]
 \item\label{MPi} there exist $\rho,\delta>0$ such that, for any $\lambda\in(0,1]$,
  $J_{\lambda}(u)\geq \delta$ for every $u\in S_\rho=\{u\in E: \|u\|=\rho\}$;
 \medskip
 \item\label{MPii} there is $v\in H_r^1(\R^3)$ with $\|v\|>\rho$ such that,
 for any $\lambda\in(0,1]$, $J_{\lambda}(v)<0$.
 \end{enumerate}
\end{lemma}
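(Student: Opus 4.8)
The plan is to mirror the argument of Lemma \ref{Lem:MP1}, exploiting that in the critical case the extra term $-\frac16\int_{\R^3}|u|^6$ appears with a favourable (negative) sign in $I$, hence in $J_\lambda$, so it only helps both near the origin and along the escaping direction. Uniformity in $\lambda\in(0,1]$ will again be automatic: the perturbation $\frac{\lambda}{3}\|u\|_2^3\ge0$ is simply discarded for the lower bound, and is controlled by its value at $\lambda=1$ for the upper bound.

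For part (i), I would start from \eqref{eq:I} with $\mu=1$ and use Lemma \ref{Lem:phi}-\eqref{Lem:phi-i} to bound $\frac12\int_{\R^3}\phi_u u^2-\frac12\int_{\R^3}(1-\sqrt{1-|\nabla\phi_u|^2})\ge 0$; dropping also $\frac\lambda3\|u\|_2^3$ gives
$$
J_\lambda(u)\ \ge\ \frac12\|u\|^2-\int_{\R^3}F(u)-\frac16\int_{\R^3}|u|^6.
$$
By the bound on $|F|$ in \eqref{eq:fF} together with the Sobolev embeddings $H^1_r(\R^3)\hookrightarrow L^{p+1}(\R^3)\cap L^6(\R^3)$, and since $p+1>2$, this yields $J_\lambda(u)\ge\frac{1-2\epsilon}{2}\|u\|^2-C\|u\|^{p+1}-C\|u\|^6$; choosing $\epsilon=1/4$ and then $\|u\|=\rho$ small enough produces $\rho,\delta>0$, independent of $\lambda$, with $J_\lambda(u)\ge\delta$ on $S_\rho$.

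For part (ii), I would test with $e_t=t^2e(t\cdot)$ for a fixed $e\in H^1_r(\R^3)\setminus\{0\}$ and $t>1$. The scaling relations $\int_{\R^3}|\nabla e_t|^2=t^3\int_{\R^3}|\nabla e|^2$, $\int_{\R^3}e_t^2=t\int_{\R^3}e^2$, $\|e_t\|_{12/5}^4=t^3\|e\|_{12/5}^4$ (so that $\int_{\R^3}\phi_{e_t}e_t^2\le C t^3\|e\|_{12/5}^4$ by Lemma \ref{Lem:tech}), $\|e_t\|_2^3=t^{3/2}\|e\|_2^3$ and $\int_{\R^3}|e_t|^6=t^9\|e\|_6^6$, combined with $F\ge0$ (from \eqref{f3}, so $-\int_{\R^3}F(e_t)\le0$ may be discarded), give
$$
J_\lambda(e_t)\ \le\ \frac{t^3}{2}\int_{\R^3}|\nabla e|^2+\frac t2\int_{\R^3}e^2+C_1t^3\|e\|_{12/5}^4+C_2t^{3/2}\|e\|_2^3-\frac{t^9}{6}\|e\|_6^6.
$$
Since the negative term carries the highest power of $t$, the right-hand side tends to $-\infty$ as $t\to\infty$; choosing $t$ large, $v:=e_t$ satisfies $\|v\|>\rho$ and $J_\lambda(v)<0$ for every $\lambda\in(0,1]$, and, as in the proof of Lemma \ref{Lem:MP1}, $e$ being arbitrary this can be arranged on any finite-dimensional subspace.

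I do not expect a genuine obstacle here: the statement is essentially a repetition of Lemma \ref{Lem:MP1}. The only mild point is to keep $\rho,\delta$ and $v$ independent of $\lambda$, which holds for the reasons above; note moreover that, in contrast with the subcritical case, hypothesis \eqref{f3} enters part (ii) only through $F\ge0$, the divergence of $J_\lambda(e_t)$ being already forced by the critical term alone.
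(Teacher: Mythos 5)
Your proposal is correct and follows essentially the same route as the paper, which proves Lemma \ref{Lem:MP2} precisely by ``arguing as in Lemma \ref{Lem:MP1}'': drop the nonnegative perturbation and use Lemma \ref{Lem:phi}-\eqref{Lem:phi-i} plus \eqref{eq:fF} for (i), and test along $e_t=t^2e(t\cdot)$ with Lemma \ref{Lem:tech} for (ii), all uniformly in $\lambda\in(0,1]$. Only note that your introductory remark that the critical term ``helps near the origin'' is inaccurate (it is a negative contribution there), but it is harmless since it enters as $-C\|u\|^6$, of higher order than $\|u\|^2$, exactly as your estimate in (i) treats it.
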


Then  there exists a $(PS)_{c_{\lambda}}$ sequence
$\{u_n\}\subset H_r^1(\R^3)$, that is,
\begin{equation*}
J_{\lambda}(u_n)\rightarrow c_{\lambda}\quad\text{and}\quad J'_{\lambda}(u_n)\rightarrow0,
\end{equation*}
where $c_{\lambda}$ is the Mountain Pass level characterized by
\begin{equation*}
c_{\lambda}=\inf\limits_{\gamma\in\Gamma}\max\limits_{t\in[0,1]}J_{\lambda}(\gamma(t))
\end{equation*}
with
$$
\Gamma_{\lambda}:=\{\gamma\in C^1([0,1],H_r^1(\R^3)):\,\gamma(0)=0\quad\text{and}\quad J_{\lambda}(\gamma(1))<0\}.
$$
\begin{remark}
As in Remark \ref{rem:bounded}, we see that $\{c_{\lambda}\}_{\lambda\in(0,1]}$ is bounded away from zero and bounded above.
\end{remark}

 In the
following, we will give an upper bound  for $c_\lambda$ which will be of use in proving
the convergence of Palais-Smale sequences.
In what follows
$$S=\inf_{v\in D^{1,2}(\mathbb R^{3})\setminus\{0\}} \frac{\displaystyle \int_{\R^3}|\nabla v|^2}
{\Big(\displaystyle\int_{\R^3}|v|^6\Big)^{1/3}}.$$

\begin{lemma}\label{Lem:estimate}
Assume that \eqref{f1}-\eqref{f4}
hold. If $r\in(4,6)$, or $r\in(2,4]$ and $D$ is sufficiently large, then $c_\lambda<\frac{1}{3}S^{{3}/{2}}$.
\end{lemma}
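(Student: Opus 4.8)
The strategy is the classical Brezis--Nirenberg estimate adapted to the present setting: test the Mountain Pass level $c_\lambda$ along a path built from the Aubin--Talenti instanton and show that the supremum along that path stays strictly below the critical threshold $\frac13 S^{3/2}$. Concretely, fix a cut-off version $v_\e$ of the extremal $U_\e(x)=\e^{-1/2}U(x/\e)$ for $S$, normalized so that $\|\nabla v_\e\|_2^2 = S^{3/2}+O(\e)$ and $\|v_\e\|_6^6 = S^{3/2}+O(\e^3)$, with the standard expansions $\|v_\e\|_2^2 = O(\e)$ and (this is where $r$ enters) $\|v_\e\|_{r+1}^{r+1} \sim \e^{3-(r+1)/2}$ for $r+1<6$. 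Since $J_\lambda(tv_\e)\to-\infty$ as $t\to+\infty$ uniformly for $\lambda\in(0,1]$ (by \ref{f4} and the growth of the critical term) and $t\mapsto J_\lambda(tv_\e)$ is positive for small $t$, one has $c_\lambda \le \sup_{t\ge 0} J_\lambda(tv_\e)$; it therefore suffices to bound this one-dimensional supremum.

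The next step is to estimate $\sup_{t\ge0} J_\lambda(tv_\e)$. Write
\begin{align*}
J_\lambda(tv_\e) &= \frac{t^2}{2}\|v_\e\|^2 + \frac{t^2}{2}\int_{\R^3}\phi_{v_\e}v_\e^2 - \frac{t^2}{2}\int_{\R^3}\big(1-\sqrt{1-|\nabla\phi_{v_\e}|^2}\big)\\
&\quad + \frac{\lambda t^3}{3}\|v_\e\|_2^3 - \int_{\R^3}F(tv_\e) - \frac{t^6}{6}\|v_\e\|_6^6.
\end{align*}
Using Lemma \ref{Lem:tech} the coupling term is controlled by $C\|v_\e\|_{12/5}^4 t^2 = O(\e^2)t^2$ (a higher-order perturbation), the term with $1-\sqrt{1-|\nabla\phi_{v_\e}|^2}$ is nonnegative and may simply be dropped, and by \ref{f4} we bound $\int F(tv_\e)\ge \frac{D t^{r+1}}{r+1}\|v_\e\|_{r+1}^{r+1}$. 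Maximizing the leading part $\frac{t^2}{2}\|\nabla v_\e\|_2^2 - \frac{t^6}{6}\|v_\e\|_6^6$ over $t$ gives exactly $\frac13\big(\|\nabla v_\e\|_2^2\big)^{3/2}\big/\big(\|v_\e\|_6^6\big)^{1/2} = \frac13 S^{3/2} + O(\e)$ at some $t_\e$ bounded and bounded away from $0$. The remaining contributions at $t\approx t_\e$ are: a harmful $+O(\e)$ from $\|\nabla v_\e\|_2^2$, a harmful $+O(\e)$ from $\|v_\e\|_2^2$ and from the $\phi$--coupling, a harmful $+\frac{\lambda t^3}{3}\|v_\e\|_2^3 = O(\e^{3/2})$, and the beneficial $-c\, D\, \e^{3-(r+1)/2}$ from the nonlinearity $f$. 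Thus $\sup_t J_\lambda(tv_\e) \le \frac13 S^{3/2} + C_1\e - C_2 D\,\e^{(5-r)/2} + o(\e^{(5-r)/2})$, with all constants independent of $\lambda\in(0,1]$.

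The conclusion now is a matter of comparing exponents. If $r\in(4,6)$ then $(5-r)/2 < 1/2 < 1$, so the beneficial term dominates the harmful $O(\e)$ for $\e$ small, and we get $c_\lambda < \frac13 S^{3/2}$ for every fixed small $\e$ and all $\lambda\in(0,1]$. If $r\in(2,4]$ the exponents satisfy $(5-r)/2 \ge 1/2$, so the term $C_1\e$ is of the same order or larger; one then fixes $\e$ small (handling the borderline $r=4$, where $(5-r)/2 = 1/2$, by first absorbing the $O(\e^{3/2})$ error) and chooses $D$ large enough that $C_2 D\,\e^{(5-r)/2} > C_1\e$, which again yields the strict inequality uniformly in $\lambda$. \textbf{The main obstacle} I expect is bookkeeping the error terms coming from the mean-curvature structure — verifying that $\int\phi_{v_\e}v_\e^2$ and $\int(1-\sqrt{1-|\nabla\phi_{v_\e}|^2})$ are genuinely $O(\e^2)$ and $O(\e^{\text{something}>1})$ respectively via Lemma \ref{Lem:tech}, and confirming uniformity in $\lambda$ of the location $t_\e$ of the maximum — rather than the classical Talenti asymptotics, which are standard.
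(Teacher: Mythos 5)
Your proposal follows essentially the same route as the paper's proof: testing the mountain-pass level along cut-off Aubin--Talenti instantons, controlling the Born--Infeld coupling through Lemma \ref{Lem:tech}, dropping the nonnegative term $\int_{\R^3}\bigl(1-\sqrt{1-|\nabla\phi_u|^2}\bigr)$, using \eqref{f4} to produce the beneficial lower-order term, and concluding by comparing exponents for $r\in(4,6)$ and taking $D$ large (with $\e$ fixed) for $r\in(2,4]$. Two cosmetic remarks: the coupling term in $J_\lambda(t v_\e)$ is $\tfrac12\int_{\R^3}\phi_{t v_\e}(t v_\e)^2\le C t^4\|v_\e\|_{12/5}^4$ (power $t^4$, not $t^2$ --- harmless since the relevant $t$ stay bounded), and, as in the paper, the supremum over all $t\ge 0$ is made rigorous by splitting $[0,\infty)$ into a small-$t$ range, a compact range, and a large-$t$ range with endpoints independent of $\e$, which is exactly the bookkeeping you flag as the remaining obstacle.
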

\begin{proof}
Since for $\lambda\in(0,1)$ it is  $J_{\lambda} \leq J_{1}$ and $\Gamma_{1} \subset \Gamma _{\lambda}$, it is sufficient to prove the result with $\lambda=1$. We will show that there exists
$\gamma\in \Gamma_{1}$ such that
$$c_{1} \leq \max_{t\in[0,1]} J_{1}(\gamma(t))<\frac13 S^{3/2}.$$

  Let $\phi\in C_0^{\infty}(\R^3)$ be a radial cut-off function with support
in $B_{2a}(0)$ so that $0\leq \phi(x)\leq 1$ and $\phi(x)\equiv1$ on $B_a(0)$, where $a>0$.

It is well-known that $S$ is attained on the functions $\frac{\epsilon^{1/4}}{(\epsilon+|x|^2)^{1/2}}$ for $\epsilon>0$.
Defining $U_\epsilon(x)=\phi(x)\frac{\epsilon^{1/4}}{(\epsilon+|x|^2)^{1/2}}$,
 a direct calculation gives, as $\varepsilon\to 0$,
\begin{equation}\label{eqn:3iu1}
\int_{\R^3}|\nabla U_\epsilon|^2=S^{3/2}+O(\epsilon^{{1}/{2}}),\\
\quad \int_{\R^3}|U_\epsilon|^6=S^{3/2}+O(\epsilon^{{3}/{2}})
\end{equation}
and
\begin{equation}\label{eqn:3iu2}
\int_{\R^3}|U_\epsilon|^t=\left\{
  \begin{array}{ll}
    O(\epsilon^{{t}/{4}}),\quad&t\in[2,3); \smallskip \\
    O(\epsilon^{{3}/{4}}|\ln\epsilon|),\quad& t=3;\smallskip \\
    O(\epsilon^{{(6-t)}/{4}}),\quad& t\in(3,6).
 \end{array}
\right.
\end{equation}
From (\ref{eqn:3iu1}) we have
\begin{equation}\label{eqn:3iu3}
\frac{\displaystyle \int_{\R^3}|\nabla U_\epsilon|^2}
{\Big(\displaystyle\int_{\R^3}|U_\epsilon|^6\Big)^{1/3}}=S+O(\epsilon^{{1}/{2}}).
\end{equation}


Define the function $$y_{\varepsilon}(t):=\frac{t^2}{2}\|U_\epsilon\|^2-\frac{ t^6}{6}\int_{\R^3}|U_\epsilon|^6.$$
It is easy to check that $y_{\varepsilon}$ attains its maximum at
\begin{equation}\label{eq:Te}
T_{\varepsilon}=\left(\frac{\|U_\epsilon\|^2}{\displaystyle\int_{\R^3}|U_\epsilon|^6}\right)^{1/4}
\quad \text{ and }  \quad
y_{\varepsilon}(T_{\varepsilon})=\frac{1}{3}\frac{\| U_\epsilon\|^3}{\|U_\epsilon\|_6^3} = \frac{1}{3}S^{3/2} + O(\varepsilon^{1/2}).
\end{equation}

\medskip

{\bf Claim. }
 For $\varepsilon$ small enough,
$\max_{t\geq0} J_{1}(t  U_{\varepsilon}) < \frac13 S^{3/2}.$
\medskip

We show that the interval $[0,+\infty)$ can be divided into three subintervals
on which the inequality of the Claim holds.

Since the $H^{1}$ and $L^{p}$ norms of $\{U_{\varepsilon}\}$
are bounded as $\varepsilon$ tends to zero, it is possible
to take $t'\in(0,1)$ such that for any $\epsilon\in(0,1)$, we have
\begin{equation}\label{eqn:3iu4}
\aligned
\max\limits_{t\in[0,t']}J_1(t {U_\epsilon})&\leq
\max\limits_{t\in[0,t']}\left(\frac{t^2  }{2}\| U_\epsilon\|^2
+\frac{t^2 }{2}\int_{\R^3}\phi_{t U_\epsilon} (U_\epsilon)^2+\frac{ t^{3}}
{3}\| U_{\varepsilon}\|_2^{3}\right)\\
&\leq\max\limits_{t\in[0,t']}\left(\frac{t^2   }{2}\|U_\epsilon\|^2
+C'\frac{t^4    }{2}\| U_\epsilon\|_{{12}/{5}}^4
+\frac{ t^{3} }{3}\| U_{\varepsilon}\|_2^{3}\right)\\
&<\frac{1}{3}S^{3/2}.
\endaligned
\end{equation}
Moreover, using hypothesis \eqref{f4}, one has
\begin{equation}\label{eqn:3iu5}
J_1(t U_\epsilon)\leq y_{\varepsilon} (t )+C'\frac{t^4}{2}\| U_\epsilon\|_{{12}/{5}}^4+\frac{t^{3}}{3}\|
U_{\varepsilon}\|_2^{3}-\frac{ D}{r}t^r\int_{\R^3}| U_\epsilon|^r
:=e_{\varepsilon}(t).
\end{equation}
It follows from Lemma \ref{Lem:MP2} and (\ref{eqn:3iu5}) that, for any $\varepsilon>0$,
$\lim\limits_{t\rightarrow+\infty}J_1(t {U_\epsilon})=-\infty$,
 $J_1(t {U_\epsilon})>0$ as $t$ is close to $0$; moreover
 there exists $t_\epsilon>0$ such that $e_{\varepsilon}(t_\epsilon)=0$
 and $e_{\varepsilon}(t)<0$ for $t>t_\epsilon$.
In particular,
\begin{equation}\label{eq:bo}
J_{1}((t_{\varepsilon}+1)U_{\varepsilon})\leq e_{\varepsilon}(t_{\varepsilon}+1)<0.
\end{equation}
Note that, as $\varepsilon$ tends to zero, $t_{\varepsilon}$ is bounded. Indeed, from
\begin{eqnarray*}\label{eq:}
\aligned
0&=e_{\varepsilon}(t_\epsilon)\\
&=t_\epsilon^2\left(\frac{1}{2}\| U_\epsilon\|^2+C'\frac{t_{\varepsilon}^2}{2}\| U_\epsilon\|
_{{12}/{5}}^4+\frac{t_{\varepsilon}}{3}\|  U_{\varepsilon}\|_2^{3}
-\frac{ t_\epsilon^4}{6}\int_{\R^3}| U_\epsilon|^6-\frac{ D}{r}t_{\varepsilon}^{r-2}\int_{\R^3}| U_\epsilon|^r\right),
\endaligned
\end{eqnarray*}
we deduce
\begin{equation*}
\aligned
\frac{1}{2}\| U_\epsilon\|^2+C'\frac{t_\epsilon^2}{2}\| U_\epsilon\|_{{12}/{5}}^4
+\frac{t_\epsilon}{3}\| U_{\varepsilon}\|_2^{3}
&=\frac{t_\epsilon^4}{6}\int_{\R^3}| U_\epsilon|^6
+\frac{D}{r}t_{\varepsilon}^{r-2}\int_{\R^3}| U_\epsilon|^r \geq\frac{t_\epsilon^4}{6}\int_{\R^3}|
U_\epsilon|^6,
\endaligned
\end{equation*}
%
%
and then, since the $H^{1}$ and $L^{p}$ norms of $U_{\varepsilon}$ are bounded,
$ t_{\varepsilon}$ has to be bounded as $\varepsilon$ varies in, let us say, $(0,\varepsilon_{0})$.
We deduce then that
$\lim_{t\rightarrow+\infty}e_{\varepsilon}(t)<0$,   uniformly in
$\epsilon\in(0,\epsilon_0)$.
As a consequence, by \eqref{eqn:3iu5},
\begin{equation*}
\lim_{t\rightarrow+\infty}J_1(t U_\epsilon)<0  \quad \text{uniformly in } \
\epsilon\in(0,\epsilon_0).
\end{equation*}
Thus there exists $t''>t^*$ such that for any $\epsilon\in(0,\epsilon_0)$,
\begin{equation}\label{eq:2s}
\max\limits_{ t\geq t''}J_1(t {U_\epsilon})<\frac{1}{3}S^{3/2}.
\end{equation}


\medskip
Finally, from  (\ref{eqn:3iu2}), (\ref{eqn:3iu3}), \eqref{eq:Te}  and (\ref{eqn:3iu5}), we infer that
\begin{equation}\label{eqn:3iu7}
\aligned
\max\limits_{t\in[t', t''] }J_1(t {U_\epsilon})
&\leq y_{\varepsilon}(t)+C'\frac{t^4}{2}\| U_\epsilon\|_{{12}/{5}}^4+\frac{ t^{3}}{3}\|U_{\varepsilon}\|
_2^{3}-\frac{D}{r}\int_{\R^3}|
U_\epsilon|^{r}\\
&\leq\frac{1}{3}S^{3/2}+O(\epsilon^{{1}/{2}})-\frac{D}{r}\int_{\R^3}| U_\epsilon|^{r}.
\endaligned
\end{equation}
From this it follows that, uniformly in $\varepsilon\in (0,\varepsilon_{0})$,
\begin{equation}\label{eqn:3iu8}
\max\limits_{t\in[t', t'']}J_1(t  {U_\epsilon})<\frac{1}{3}S^{3/2}.
\end{equation}
Indeed if $r\in(2,4]$ and $D$
sufficiently large, $\epsilon\in(0,\epsilon_0)$ fixed, the conclusion follows
from (\ref{eqn:3iu2}) and (\ref{eqn:3iu7}).
On the other hand, if  $r\in(4,6)$, since $\frac{6-r}{4}<\frac{1}{2}$,
\eqref{eqn:3iu8} follows from (\ref{eqn:3iu2}) possibly reducing $\varepsilon$.

Then the Claim follows by \eqref{eqn:3iu4}, \eqref{eq:2s} and \eqref{eqn:3iu8}
since $t'$ and $t''$ do not depend on $\varepsilon$ (whenever it small).

\medskip


Define the curve
$$\xi_{\varepsilon}: t\in[0,+\infty) \mapsto t (t_{\varepsilon}+1)U_{\varepsilon} \in H^{1}_{r}(\mathbb R^{3})$$
and note that, since $\xi_{\varepsilon}(1) = (t_{\varepsilon}+1)U_{\varepsilon}$, by \eqref{eq:bo}
and the Claim it holds
$$\gamma_{\varepsilon}:=\xi_{\varepsilon} |_{[0,1]} \in \Gamma_{1} \quad \text{ and } \quad  c_{1} \leq \max_{t\in [0,1]}
J_{1}(\gamma_{\varepsilon}(t))
\leq \max_{t\geq0} J_{1}(\xi_{\varepsilon}(t)) < \frac13 S^{3/2}$$
and the proof is completed.
\end{proof}

As a consequence we get the Palais-Smale condition
below the level $\frac13 S^{3/2}$.

\begin{lemma}\label{Lem:2-compact}
For fixed $\lambda\in (0,1]$, let $\{u_n\}\subset H_r^1(\R^3)$ be a $(PS)_{c_{\lambda}}$ sequence of $J_{\lambda}$.
Then there exists $u\in H_r^1(\R^3)$ such that
$u_n\rightarrow u$ in $H_r^1(\R^3)$.
\end{lemma}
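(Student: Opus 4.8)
The plan is to mirror the two-step structure of Lemma~\ref{Lem:compact} and add a concentration--compactness analysis to absorb the critical term $|u|^4u$, the new ingredient being the strict energy bound $c_\lambda<\tfrac13S^{3/2}$ of Lemma~\ref{Lem:estimate}. For \emph{boundedness} I would proceed as in the first part of the proof of Lemma~\ref{Lem:compact}, estimating $J_\lambda(u_n)-\tfrac1\theta J'_\lambda(u_n)[u_n]$ with $\theta\in(4,6)$: the coefficient of $\|u_n\|^2$ is positive, that of $\int_{\R^3}|u_n|^6$ equals $\tfrac1\theta-\tfrac16>0$, and by \eqref{eq:1/2} the Born--Infeld and coupling terms combine with a favourable sign since $\theta>4$; the subcritical term $\int_{\R^3}|u_n|^{p+1}$ coming from \eqref{f2} is absorbed via Young's inequality between $\int_{\R^3}|u_n|^2$ and $\int_{\R^3}|u_n|^6$, the $L^2$-mass being in turn controlled through $\tfrac{\theta-3}{3\theta}\lambda\|u_n\|_2^3\ge B_1\|u_n\|_2^2-B_2$ with $B_1$ large. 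This gives $\|u_n\|\le C$, so up to a subsequence $u_n\rightharpoonup u$ in $H_r^1(\R^3)$, $u_n\to u$ in $L^p(\R^3)$ for every $p\in(2,6)$ by the compact radial embedding, $u_n\to u$ a.e., $\|u_n\|_2\to t_0\ge\|u\|_2$, and $\phi_{u_n}\to\phi_u$ in $L^\infty(\R^3)$ by Lemma~\ref{Lem:compact0}$(vi)$.

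Next I would pass to the limit in $J'_\lambda(u_n)[\varphi]=o_n(1)$ for $\varphi\in C_0^\infty(\R^3)$: the nonlocal term passes because $\|\phi_{u_n}-\phi_u\|_\infty\to0$ and $u_n\rightharpoonup u$ in $L^2(\R^3)$, the subcritical term by \eqref{eq:fF} and the strong $L^p$-convergence, and the critical one because $|u_n|^4u_n\rightharpoonup|u|^4u$ in $L^{6/5}(\R^3)$. This yields that $u$ weakly solves $-\triangle u+(1+\lambda t_0)u+\phi_uu=f(u)+|u|^4u$. By the same convergences (and Hölder, putting the bounded $L^2$ factor on $u_n$ and the strongly convergent factor in $L^3$) one checks $\int_{\R^3}\phi_{u_n}u_n^2\to\int_{\R^3}\phi_uu^2$, $\int_{\R^3}F(u_n)\to\int_{\R^3}F(u)$ and $\int_{\R^3}f(u_n)u_n\to\int_{\R^3}f(u)u$.

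For the \emph{concentration analysis} set $v_n=u_n-u\rightharpoonup0$ and, along a subsequence, $\alpha=\lim\|\nabla v_n\|_2^2$, $\beta=\lim\|v_n\|_2^2$, $\tau=\lim\int_{\R^3}|v_n|^6$. Testing $J'_\lambda(u_n)$ along $u_n$ and along $u$, subtracting, and using the three convergences above together with Brezis--Lieb for the $L^6$-term and the limit equation tested with $u$, all lower order and nonlocal contributions cancel and one is left with $\alpha+(1+\lambda t_0)\beta=\tau$. Hence $\alpha\le\tau$, while the Sobolev inequality gives $S\tau^{1/3}\le\alpha$, so either $\tau=0$ or $\tau\ge S^{3/2}$. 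To rule out the second alternative I would decompose $c_\lambda=\lim(J_\lambda(u_n)-\tfrac13J'_\lambda(u_n)[u_n])$ via Brezis--Lieb, bound the bubble's energy from below by $\tfrac13S^{3/2}$, and use $J_\lambda(u)\ge0$ — which follows, as in the chain \eqref{eqn:Pr1}--\eqref{eqn:Pr5}, by combining the equation solved by $u$ with the corresponding Pohozaev identity (Lemma~\ref{Lem:poh}) — to conclude $c_\lambda\ge\tfrac13S^{3/2}$, contradicting Lemma~\ref{Lem:estimate}. Therefore $\tau=0$; the identity $\alpha+(1+\lambda t_0)\beta=\tau$ then forces $\alpha=\beta=0$, so $v_n\to0$ in $H_r^1(\R^3)$, i.e. $u_n\to u$, and in particular $t_0=\|u\|_2$ so that $u$ is a genuine critical point of $J_\lambda$.

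The delicate step is clearly the exclusion of concentration. Contrary to the pure critical Schr\"odinger equation, the Brezis--Lieb lemma does not split the Born--Infeld term $\int_{\R^3}(1-\sqrt{1-|\nabla\phi_{u_n}|^2})$ nor the nonlocal term $\int_{\R^3}\phi_{u_n}u_n^2$ additively, so these have to be handled through the compactness properties in Lemma~\ref{Lem:compact0}$(v)$--$(vi)$ and the inequality \eqref{eq:1/2}; moreover the sign of $J_\lambda(u)$ genuinely relies on the Pohozaev identity, since a direct functional estimate breaks down when $\varrho<4$. I also expect some care to be needed in keeping track of the spurious coefficient $t_0$ in the limit equation until the final step forces $t_0=\|u\|_2$.
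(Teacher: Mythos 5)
Your overall strategy is the one the paper itself follows: the same $\theta\in(4,6)$ boundedness estimate, the introduction of $A=\lim_n\|u_n\|_2$ so that the weak limit $u$ solves the problem with the extra mass term $\lambda A u$ (equivalently, $u$ is a critical point of $J_{A,\lambda}(v)=J_\lambda(v)-\frac{\lambda}{3}\|v\|_2^3+\frac{\lambda A}{2}\|v\|_2^2$), a Pohozaev-based sign information on the limit, the identity $\|w_n\|^2+\lambda A\|w_n\|_2^2-\|w_n\|_6^6=o_n(1)$ for $w_n=u_n-u$, and the strict bound $c_\lambda<\frac13S^{3/2}$ of Lemma~\ref{Lem:estimate}; your final dichotomy ($\tau=0$ or $\tau\ge S^{3/2}$) is only a cosmetic variant of the paper's direct contradiction between $\|w_n\|^{4/3}<S+o_n(1)$ and $S\le\|w_n\|^{4/3}+o_n(1)$.

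Two steps of your sketch, however, would not go through as written, and they are exactly where the paper spends its effort. First, the Born--Infeld term: to justify your energy decomposition you need $\limsup_n\int_{\R^3}(1-\sqrt{1-|\nabla\phi_{u_n}|^2})\le\int_{\R^3}(1-\sqrt{1-|\nabla\phi_u|^2})$, and neither the uniform convergence $\phi_{u_n}\to\phi_u$ of Lemma~\ref{Lem:compact0}\,$(vi)$ nor \eqref{eq:1/2} gives this; weak convergence of $\nabla\phi_{u_n}$ only yields the opposite (lower semicontinuity) inequality. The missing ingredient is the minimality of $\phi_{u_n}$ for $E_{u_n}$ (Lemma~\ref{Lem:phi}-\eqref{Lem:phi-i}): from $E_{u_n}(\phi_{u_n})\le E_{u_n}(\phi_u)$ together with $\int_{\R^3}|\phi_u|\,|u_n^2-u^2|\to0$ and $\int_{\R^3}|\phi_{u_n}-\phi_u|\,u_n^2\to0$ one obtains the paper's Claim $E_{u_n}(\phi_{u_n})-E_u(\phi_u)=o_n(1)$, which is what your splitting requires. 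Second, the sign of the limit: ``$J_\lambda(u)\ge0$'' is neither what your decomposition needs nor what the Pohozaev chain gives, because $u$ is a critical point of $J_{A,\lambda}$, not of $J_\lambda$; the computation \eqref{eqn:4-Pr3}--\eqref{eqn:4-Pr5} yields $J_{A,\lambda}(u)\ge\frac{\lambda A}{3}\|u\|_2^2$, and since $J_\lambda(u)=J_{A,\lambda}(u)+\frac{\lambda}{3}\|u\|_2^3-\frac{\lambda A}{2}\|u\|_2^2$ with $\|u\|_2\le A$, the inequality $J_\lambda(u)\ge0$ does not follow from it. What your contradiction argument actually needs is the nonnegativity of the $u$-part of $\lim_n\bigl(J_\lambda(u_n)-\frac13J'_\lambda(u_n)[u_n]\bigr)$, which (once the Claim is available) equals $J_{A,\lambda}(u)-\frac{\lambda A}{6}\|u\|_2^2\ge\frac{\lambda A}{6}\|u\|_2^2\ge0$ by \eqref{eqn:4-Pr5}. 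With these two repairs, both of which are carried out in the paper, your plan closes.
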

\begin{proof}
We first show that the sequence $\{u_n\}$ is bounded in $H_r^1(\R^3)$.
Taking $\theta\in(4,6)$, by Lemma \ref{Lem:tech} there exist
$C_i>0,i=1,\ldots,4$ such that
\begin{equation*}
\aligned
C_1+C_2\|u_n\|&\geq J_{\lambda}(u_n)-\frac{1}{\theta}J'_\lambda(u_n)[u_n]\\
&\geq \frac{\theta-2}{2\theta}\|u_n\|^2-\frac{\theta-2}{2\theta}\int_{\R^3}\phi_{u_n}u_n^2
+\frac{\theta-3}{3\theta}\lambda\|u_n\|_2^{3}\\
&+\int_{\R^3}(\frac{1}{\theta}f(u_n)u_n-F(u_n))-\frac{1}{2}\int_{\R^3}(1-\sqrt{1-|\nabla \phi_{u_n}|^2})
+\frac{6-\theta}{6\theta}\int_{\R^3}|u_n|^{6}\\
 &\geq \frac{\theta-2}{2\theta}C_3\|u_n\|^2-\frac{\theta-2}{2\theta}\int_{\R^3}\phi_{u_n}u_n^2
+\frac{\theta-3}{3\theta}\lambda\|u_n\|_2^{3}\\
&-C_4\int_{\R^3}|u_n|^{p+1}-\frac{1}{2}\int_{\R^3}(1-\sqrt{1-|\nabla \phi_{u_n}|^2})
+\frac{6-\theta}{6\theta}\int_{\R^3}|u_n|^{6},
\endaligned
\end{equation*}
which implies by (\ref{eqn:PS2}) and Lemma \ref{Lem:phi} that
\begin{equation*}
\aligned
C_1+C_2\|u_n\|
\geq &\frac{\theta-2}{2\theta}C_3\|u_n\|^2+\frac{\theta-4}{4\theta}\int_{\R^3}\phi_{u_n}u_n^2
+\frac{\theta-3}{3\theta}\|u_n\|_2^{3}\\
&-C_4\int_{\R^3}|u_n|^{p+1}
+\frac{6-\theta}{6\theta}\int_{\R^3}|u_n|^{q+1}\\
\geq &\frac{\theta-2}{2\theta}C_3\|u_n\|^2
+\frac{\theta-3}{3\theta}\|u_n\|_2^{3}\\
&-C_4\int_{\R^3}|u_n|^{p+1}
+\frac{6-\theta}{6\theta}\int_{\R^3}|u_n|^{q+1}.
\endaligned
\end{equation*}
Then arguing similarly as in Lemma \ref{Lem:compact}, we deduce that
$\|u_n\|\leq C$ for some $C$ independently of $n$.
Thus, there exists a subsequence of $\{u_n\}$ (still denoted by $\{u_n\}$, without loss of generality)
 such
that
\begin{equation}\label{eqn:4-imbedding}
\begin{aligned}
&u_n\rightharpoonup u \quad{\rm weakly \,in }\, H_r^1(\R^3),\\
&u_n\rightarrow u\quad{\rm in}\, L^p(\R^3),\, 2<p<6,\\
&u_n\rightarrow u\quad {\rm a.e.\,in}\, \R^3
\end{aligned}
\end{equation}
for some $u\in H_r^1(\R^3)$.
Moreover, we assume that there exists $A\geq0$ such that
$\|u_n\|_2\rightarrow A$ as $n\rightarrow\infty$. Define the energy functional
\begin{eqnarray*}
J_{A,\lambda}(v)&:=&J_{\lambda}(v) -\frac{\lambda}{3}\|v\|_2^{3} +\frac{\lambda A}{2}\|v\|_{2}^{2}\\
&=&\frac{1}{2}\|v\|^2+\frac{\lambda A}{2}\|v\|_2^2+\frac{1}{2}\int_{\R^3}\phi_{v}v^2
-\frac{1}{2}\int_{\R^3}(1-\sqrt{1-|\nabla \phi_{v}|^2})-\int_{\R^3}F(v)-\frac{1}{6}\int_{\R^3}|v|^6.
\end{eqnarray*}
Since  $\{u_n\}$ is a bounded Palais-Smale sequence for $J_\lambda$ at level $c_{\lambda}$, we easily get
$$J_{A,\lambda}(u_n)\rightarrow c_\lambda+\frac{\lambda A^3}{6} \quad\text{ and } \quad
J'_{A,\lambda}(u_n)\rightarrow0  \ \text{ in  $H^{-1}$ as }n\rightarrow\infty.$$
 Here $H^{-1}$ is the dual space of $H_r^1(\R^3)$. Standard
 computations give that $J'_{A,\lambda}(u)=0$.
Observe from Lemma \ref{Lem:poh} that we can get the associated Pohozaev type identity
to the functional  $J_{A,\lambda}$ which is
\begin{multline}\label{eqn:4-Pr3}
\frac{1}{2}\int_{\R^3}|\nabla u|^2+\frac{3}{2}\int_{\R^3}u^2
+2\int_{\R^3}\phi_{u}u^2 +\frac{3\lambda}{2}A\|u\|_2^{2}\\
-\frac{3}{2}\int_{\R^3}(1-\sqrt{1-|\nabla \phi_{u}|^2})
=3\int_{\R^3}F(u)+\frac{3}{6}\int_{\R^3}|u|^{6}.
\end{multline}
From \eqref{f3} and $J'_{A,\lambda}(u)[u]=0$, we have
\begin{equation}\label{eqn:4-Pr4}
2\int_{\R^3}F(u)\leq \frac{2}{\varrho}\int_{\R^3}(|\nabla u|^2+u^2)
+\frac{2}{\varrho}\int_{\R^3}\phi_{u}u^2+\frac{2\lambda A}{\varrho}\|u\|_2^{2}
-\frac{2}{\varrho}\int_{\R^3}|u|^{6}.
\end{equation}
It follows from (\ref{eqn:4-Pr3}) and \eqref{eqn:4-Pr4} that
\begin{multline*}\label{eqn:4-Pr4+}
\int_{\R^3}F(u)\leq \left(\frac{2}{\varrho}-\frac{1}{6} \right)\int_{\R^3}|\nabla u|^2
+\left(\frac{2}{\varrho}-\frac{1}{2}\right )\int_{\R^3}u^2+\left(\frac{2}{\varrho}-\frac{2}{3}\right)\int_{\R^3}\phi_{u}u^2
\\
+\left(\frac{2}{\varrho}-\frac{1}{2}\right)\lambda A\|u\|_2^{2}
+\left(\frac{1}{6} -\frac{2}{\varrho} \right)\int_{\R^3}|u|^{6}+
\frac{1}{2}\int_{\R^3}(1-\sqrt{1-|\nabla\phi_u|^2}).
\end{multline*}
Using the definition of
$J_{A,\lambda}$,
 we have
\begin{equation}\label{eqn:4-Pr5}
\aligned
J_{A,\lambda}(u)\geq& \  \left(\frac{2}{3}-\frac{2}{\varrho}\right)\int_{\R^3}|\nabla u|^2
+\left(1-\frac{2}{\varrho}\right)\int_{\R^3}u^2
+\left(\frac{7}{6}-\frac{2}{\varrho}\right)\int_{\R^3}\phi_{u}u^2\\
&-\int_{\R^3}(1-\sqrt{1-|\nabla \phi_u|^2})
+\left(1-\frac{2}{\varrho}\right)\lambda A\|u\|_2^{2}
+\left(\frac{2}{\varrho}-\frac{1}{3}\right)\|u\|_{6}^{6}\\
\geq& \  \frac{1}{3}\lambda A\|u\|_2^{2}.
\endaligned
\end{equation}
Moreover,  defining $w_n:=u_n-u$,
 as in the proof of Lemma \ref{Lem:compact}, from $J'_{A,\lambda}(u_{n})[w_{n}]=o_{n}(1)$ we get,
\begin{equation*}
\|u_{n}\|^{2} - \|u\|^{2}+\lambda A\|u_{n}\|_2^{2} -
\lambda A\|u\|_2^{2}-\|u_{n}\|^{6}_{6}+\|u\|_{6}^{6} = o_{n}(1).
\end{equation*}
and by  means of the Brezis-Lieb lemma (see \cite{Brezis83})
\begin{equation}\label{eqn:eq:}
\|w_{n}\|^{2} +\lambda A\|w_{n}\|_2^{2}-\|w_{n}\|^{6}_{6} = o_{n}(1).
\end{equation}

As in Lemma \ref{Lem:phi} let
$$
E_u(\phi_{u})=\int_{\R^3}(1-\sqrt{1-|\nabla\phi_{u}|^2})-\int_{\R^3}\phi_{u} u^2.
$$
{\bf Claim: } $E_{u_{n}}(\phi_{u_{n}}) - E_{u}(\phi_{u})=o_{n}(1)$.
From \cite[Remark 5.5]{BdP},  the convergence in  \eqref{eqn:4-imbedding} implies that
 $\phi_{u_n}$ converges to $\phi_u$ weakly in $X$ and uniformly in
$\R^3$. Since $X$ is weakly closed, we get $\phi_u\in X$.
On the other hand, recalling Lemma \ref{Lem:compact0}, the following holds:
\begin{eqnarray*}
&& \int| \phi_{u}u_{n}^{2} -\phi_{u}u^{2}| \leq C \|\nabla \phi_{u}\|_{2}\|u_{n}^{2} - u^{2}\|_{6/5}
 \to 0
 \end{eqnarray*}
and so it is easy to see that
\begin{equation}\label{eqn:4-PS4+}
E_{u_n}(\phi_{u})\rightarrow E_{u}(\phi_{u}),\quad\text{as}\,\,n\rightarrow\infty.
\end{equation}
In view of Lemma \ref{Lem:phi}, one has $E_{u_n}(\phi_{u_n})\leq E_{u_n}(\phi_{u})$
that joint with \eqref{eqn:4-PS4+}, gives that
\begin{equation}\label{eqn:4-PS5}
\limsup\limits_{n\rightarrow\infty}E_{u_n}(\phi_{u_n})\leq E_{u}(\phi_{u}).
\end{equation}
As in \cite[Lemma 2.4]{Yu2010}, one can conclude that
\begin{equation}\label{eqn:4-PS6}
E_u(\phi_u)\leq\liminf\limits_{n\rightarrow\infty}E_{u}(\phi_{u_n}).
\end{equation}
Recalling once again Lemma \ref{Lem:compact0}, we have
$$
 \int|\phi_{u_{n}}u_{n}^{2} -\phi_{u_n}u^{2}|\leq C \|\nabla \phi_{u_{n}}\|_{2}\|u_{n}^{2} - u^{2}\|_{6/5}=o_n(1),
$$
which implies  that $E_{u}(\phi_{u_n})=E_{u_n}(\phi_{u_n})+o_n(1)$. Hence, from (\ref{eqn:4-PS6}) we have
\begin{equation}\label{eqn:4-PS6+}
E_u(\phi_u)\leq\liminf\limits_{n\rightarrow\infty}E_{u_n}(\phi_{u_n}).
\end{equation}
The Claim follows by \eqref{eqn:4-PS5} and \eqref{eqn:4-PS6+}.

\medskip

By the Brezis-Lieb Lemma and  using the Claim we arrive at
\begin{eqnarray*}
J_{A,\lambda}(u_n)-J_{A,\lambda}(u)&=&
\frac{1}{2}\|w_n\|^2+\frac{\lambda A}{2}\|w_n\|_2^2-E_{u_n}(\phi_{u_n})+E_{u}(\phi_{u})-\frac{1}{6}\|
w_n\|^6_{6}\\
&=&
\frac{1}{2}\|w_n\|^2+\frac{\lambda A}{2}\|w_n\|_2^2
-\frac{1}{6}\|w_n\|_6^6+o_{n}(1)
\end{eqnarray*}
and by (\ref{eqn:eq:}) we have
\begin{equation*}
J_{A,\lambda}(u_n)-J_{A,\lambda}(u)=\frac{1}{3}\|w_n\|^2+\frac{\lambda A}{3}\|w_n\|_2^2+o_{n}(1).
\end{equation*}
Then, taking into account  Lemma \ref{Lem:estimate} and (\ref{eqn:4-Pr5}) one has
\begin{eqnarray*}
\frac{1}{3}S^{3/2}+\frac{\lambda A^3}{6}-\frac{\lambda A}{3}\|u\|_2^{2}
&>& c_\lambda+\frac{\lambda A^3}{6}-J_{A,\lambda}(u)\\
&=& J_{A,\lambda}(u_n)-J_{A,\lambda}(u)+o_{n}(1)\\
&=&\frac{1}{3}\|w_n\|^2+\frac{\lambda A}{3}\|w_n\|_2^2 +o_{n}(1)\\
&=&\frac{1}{3}\|w_n\|^2 +\frac{\lambda A}{3}\|u_{n}\|_{2}^{2} - \frac{\lambda A}{3}\|u\|_{2}^{2}+o_{n}(1)
\end{eqnarray*}
from which it follows
$\frac{1}{3}S^{3/2}>\frac{1}{3}\|w_n\|^2+o_{n}(1)$,
and then
\begin{equation}\label{eqn:4-PS7}
\|w_{n}\|^{4/3}  +o_{n}(1) < S.
\end{equation}
On the other hand, from \eqref{eqn:eq:} we get $\|w_{n}\|^{6}_{6} \geq \|w_{n}\|^{2}+o_{n}(1)$
and then  $\|w_{n}\|_{6}^{2} \geq \|w_{n}\|^{2/3} +o_{n}(1).$
As a consequence, recalling the definition of $S$,
$$\|w_n\|^2\geq S\|w_n\|_6^2 \geq S\|w_{n}\|^{2/3}+o_{n}(1)$$
and then, if $\|w_{n}\|\not\to0$,
$$S\leq \|w_{n}\|^{4/3} +o_{n}(1)$$
which is in contradiction with \eqref{eqn:4-PS7} and concludes the proof.
\end{proof}

We give now the
\subsection{Proof of Theorem \ref{th:crit}}
Indeed now the proof follows as for Theorem \ref{Thm:EXST} so we simply sketch  it.

By Lemma \ref{Lem:2-compact} we deduce
that for fixed $\lambda\in(0,1]$, there exists $u_\lambda\in H_r^1(\R^3)$ such that
$$J_\lambda(u_\lambda)=c_\lambda \quad\text{ and} \quad J'_\lambda(u_\lambda)=0$$
which  gives a solution of \eqref{eqn:3-spos}.
Note now that the computations from \eqref{eqn:Pr1} to \eqref{eqn:Pr5}
can be repeated  even for the functional $J_{\lambda}$ and then
we obtain  that $c_{\lambda}=J_{\lambda}(u_{\lambda} ) \geq C\|u_{\lambda}\|^{2}$.
Arguing as before, as $\lambda\to 0^{+}$,
 $$u_{\lambda}\ \rightharpoonup u_{0} \ \text{ in } \  H^{1}_{r}(\mathbb R^{3})
 \qquad\text{and}\qquad
 J_{\lambda}(u_{\lambda})=c_{\lambda} \to c_{0}>0.$$
 For any $\varphi\in C_0^\infty(\R^3)$, we have
 $$
 J'(u_{\lambda})[\varphi]= J'_{\lambda}(u_{\lambda})[\varphi]-\lambda\|u_{\lambda}\|_2  \int_{\mathbb R^{3}} u_\lambda\varphi=o_{\lambda}(1)\|\varphi\|
 $$
 and
 $$J(u_{\lambda}) = J_{\lambda}(u_{\lambda}) -\frac{\lambda}{3}\|u_{\lambda}\|_{2}^{3}=c_{0}+o_{\lambda}(1)$$
Thus, $\{u_\lambda\}_{\lambda\in(0,1]}$ is a Palais-Smale sequence for $J$ with level $c_0$.
Then $u_{0}$ is a nontrivial solution of \eqref{SBI} with $\mu=1$.
Observing that Remark \ref{rem:I} is valid also for the functional $J$, the proof is concluded
exactly as before.

\end{document}